\documentclass[11pt]{amsart}
\usepackage{amsmath,mathtools}
\usepackage{amsmath, amsthm, amssymb, amsfonts, enumerate}
\usepackage[colorlinks=true,linkcolor=blue,urlcolor=blue]{hyperref}
\usepackage{dsfont}
\usepackage{color}
\usepackage{geometry}
\usepackage{todonotes}
\usepackage{epstopdf}
\usepackage{bbm}
\usepackage{geometry}
\usepackage[utf8]{inputenc}
\usepackage{bm}
\usepackage{amsfonts}
\usepackage{amsfonts}
\usepackage{textcomp}
\usepackage{amssymb}
\usepackage{float}
\usepackage{tikz}
\usepackage{epsfig}
\usepackage{amsmath}
\usepackage[english]{babel}
\usepackage{a4}
\usepackage{enumerate}
\usepackage{tikz,pgf,xspace}
\usepackage[all]{xy}
\geometry{hmargin=2.4cm, vmargin=2.4cm}
\usepackage{amsmath}
\newcommand{\stkout}[1]{\ifmmode\text{\sout{\ensuremath{#1}}}\else\sout{#1}\fi}

\newtheorem{theorem}{Theorem}[section]
\newtheorem{remark}[theorem]{Remark}

\newtheorem{proposition}[theorem]{Proposition}

\def \E{\mathsf{E}}

\def \R{\mathbb{R}}

\DeclareMathOperator*{\argmin}{argmin}

\definecolor{red}{rgb}{1.0,0.0,0.0}

\definecolor{blu}{rgb}{0.0,0.0,1.0}

\definecolor{gre}{rgb}{0.03,0.50,0.03}

\usepackage{color}
\definecolor{red-}{rgb}{1.0,0.0,0.0}
\definecolor{grey}{rgb}{0.6, 0.6, 0.6}
\definecolor{brown}{rgb}{0.5,0.2,0.0}
\definecolor{brown-}{rgb}{0.0,0.1,1.0}
\definecolor{green-}{rgb}{0.0, 0.6, 0.0}
\definecolor{gold}{rgb}{0.8,0.7,0.0}
\definecolor{black}{rgb}{0.0,0.0,0.0}
\definecolor{DarkGreen}{rgb}{0.0,0.3,0.2}
\definecolor{LightGreen}{rgb}{0.8,1.0, 0.8}
\definecolor{yellow}{rgb}{0.9,0.9,0.0}
\definecolor{blue-}{rgb}{0.0,0.1,1.0}

\title[Irreversible reinsurance with fixed cost]{Irreversible reinsurance: Minimization of Capital Injections in Presence of a Fixed Cost}
\author[Federico]{Salvatore Federico}
\author[Ferrari]{Giorgio Ferrari}
\author[Torrente]{Maria Laura Torrente}

\address{S.~Federico: Dipartimento di Matematica, Universit\`a di Bologna, Piazza di Porta S.\ Donato 5, 40126, Bologna, Italy}
\email{\href{mailto:s.federico@unibo.it}{s.federico@unibo.it}}
\address{G.~Ferrari: Center for Mathematical Economics (IMW), Bielefeld University, Universit\"atsstrasse 25, 33615, Bielefeld, Germany}
\email{\href{mailto:giorgio.ferrari@uni-bielefeld.de}{giorgio.ferrari@uni-bielefeld.de}}
\address{M.L.~Torrente: Dipartimento di Economia, Universit\`a di Genova, Via Vivaldi 5, 16126 Genova, Italy}
\email{\href{mailto:marialaura.torrente@economia.unige.it}{marialaura.torrente@economia.unige.it}}
\date{\today}

\numberwithin{equation}{section}

\begin{document}

\begin{abstract} 
We propose a model in which, in exchange to the payment of a fixed transaction cost, an insurance company can choose the retention level as well as the time at which subscribing a perpetual reinsurance contract. The surplus process of the insurance company evolves according to the diffusive approximation of the Cram\'er-Lundberg model, claims arrive at a fixed constant rate, and the distribution of their sizes is general. Furthermore, we do not specify any particular functional form of the retention level. The aim of the company is to take actions in order to minimize the sum of the expected value of the total discounted flow of capital injections needed to avoid bankruptcy and of the fixed activation cost of the reinsurance contract. We provide an explicit solution to this problem, which involves the resolution of a static nonlinear optimization problem and of an optimal stopping problem for a reflected diffusion. We then illustrate the theoretical results in the case of proportional and excess-of-loss reinsurance, by providing a numerical study of the dependency of the optimal solution with respect to the model's parameters.
\end{abstract}

\maketitle

\smallskip

{\textbf{Keywords}}: reinsurance; fixed cost; capital injections; diffusive risk model; optimal stopping.

\smallskip

{\textbf{MSC2010 subject classification}}: 97M30, 91B30, 60G40, 49L20.
  
\smallskip

{\textbf{JEL classification}}: C61, G22, C41.


\section{Introduction}
\label{introduction}

Reinsurance contracts usually run for long time periods (at least for longer than the typical maturity of financial contracts) and are exposed to high frictional costs. As a result, reinsurance negotiations are costly, lengthy, and can be thought of as irreversible, cf.\ \cite{Changetal}. As noticed by \cite{CumminsGeman}, it is indeed the case that ``although reinsurance, in principle, is reversible, in practice reversing a reinsurance transaction exposes the insurer to relatively high transaction costs as well as additional charges to protect the reinsurer against adverse selection.'' Furthermore, many external factors can interfere with changes in the reinsurance contracts. It is recent news that ``2023's renegotiation of reinsurance policies has been the most challenging in years as reinsurers respond to pressure from spiralling inflation and large losses from natural catastrophes, as well as the fallout from Russia's invasion of Ukraine'' (cf.\ Ian Smith, Insurance Correspondent of the ``Financial Times'', January 3 2023\footnote{\url{https://www.ft.com/content/f5f9d450-c539-47a7-bc5c-44a8db57e74e}}). 

Optimal reinsurance decisions are typically formulated in terms of regular control problems, thus neglecting the aforementioned irreversibility feature. Given the vastity of the related literature, we refrain here form providing a list of references (that would necessarily result in being not exhaustive) and we simply refer to the discussion in Chapter 2 of \cite{Schmidli2008} or in Chapter 11 of \cite{AsmussenSteffensen} for models and solutions. However, in the last decade the actuarial literature has started experiencing models of optimal irreversible reinsurance. In \cite{BrachettaCeci2021} it is investigated an optimal reinsurance problem under fixed cost, for an insurance company aiming at maximizing exponential expected utility at terminal time. The problem of optimal reinsurance negotiations with implementation delay and fixed cost is considered in \cite{EgamiYoung}, the optimal timing for the activation of an excess-of-loss reinsurance with fixed costs is studied in \cite{LiZhouYao}, while the presence of additional proportional transaction costs for a company minimizing the ruin probability is treated in \cite{Lietal}. Finally, a singular stochastic control model for the optimal sequential adjustment of reinsurance contracts has been recently formulated in \cite{Yanetal}.

Our paper contributes to that bunch of literature by proposing a model in which, in exchange to the payment of a fixed transaction cost, an insurance company can choose the retention level as well as the time at which subscribing a (perpetual) reinsurance contract. Assuming that investors inject capital to avoid bankruptcy of the company, the insurance company aims at minimizing the sum of the expected value of the discounted fixed activation cost and of the cumulative discounted flow of capital injections. Capital injection models have been introduced by Dickson and Waters in \cite{DiksonWaters}. Therein, starting from the observation that ruin occurs almost surely when the company pays dividends by following the optimal strategy of the de Finetti's problem, a model has been suggested in which the shareholders are obliged to inject capital in order to avoid bankruptcy. We also refer to \cite{Ferrari17}, \cite{FerrariSchuhmann}, \cite{LokkaZervos}, \cite{ScheerSchmidli2011}, \cite{Schmidli2016}, \cite{Yang} and reference therein for works related to the optimal dividends' distribution in presence of capital injections (see also \cite{Decamps} and \cite{Zhou} for the case of impulsive injections of capital). In the context of optimal reinsurance problems, the employment of the cumulative discounted flow of capital injections as a risk measure alternative to the ruin probability has firstly been proposed in \cite{EisenbergSchmidli2009}, and later also used in \cite{EisenbergFabrykowskiSchmeck} and \cite{EisenbergSchmidli2011b}. As a matter of fact, the use of the ruin probability as measure of risk presents drawbacks: first of all, it is not a coherent risk measure, this potentially leading to decisions that are not economically sounded; second of all, it does not provide information about neither the time of ruin nor the severity of ruin. In order to define a unified framework for the evaluation of a variety of risk quantities, and in particular to give indications about the deficit at ruin and the time of ruin, Gerber and Shiu proposed in \cite{GerberShiu} the so-called expected discounted penalty function -- also known as Gerber-Shiu function -- of which the capital injection criterion represents an example (see, e.g., Section 2.4.3 in \cite{HeKawaiShimizuYamazaki2023} or Chapter 4 in \cite{SilvestrovMartinLoef}).

In this paper, we assume that the surplus process of an insurance company evolves according to the diffusive approximation of the Cram\'er-Lundberg model. Claims arrive at a fixed constant rate and the distribution of their sizes is general. Furthermore, we do not specify any form of the retention level, which is simply assumed to be a continuous function, non decreasing with respect to the reinsurance parameter. The company can choose the time $\tau$ at which buying reinsurance and the desired retention level, which will then be kept from time $\tau$ on. Those once-for-all actions involve a fixed cost, which is immediately withdrawn from the company's surplus at time $\tau$. The aim of the company is to take actions in such a way that the total discounted costs of capital injection and of the reinsurance contract are minimized. We provide an explicit solution to this problem which we show can be solved via a two-step procedure (see also \cite{BrachettaCeci2021} and \cite{Lietal}, among others). We first solve for the optimal retention level, which is uniquely identified through the solution to a nonlinear algebraic equation. Then, given the optimal retention level, we look for the optimal time at which it is worth activating the irreversible reinsurance contract. This turns out to be given as the solution to a one-dimensional optimal stopping problem for a reflected drifted Brownian motion. We use the classical guess-and-verify approach by determining a smooth solution to the corresponding variational inequality with Neumann boundary condition and then by verifying the actual optimality of the candidate policy. It is worth noticing that, given the reflecting condition of the surplus process at zero, the verification argument requires quite some technical work in order to check that the variational inequality is indeed satisfied by the candidate value function (see the proofs of Proposition \ref{prop1} and of Theorem \ref{prop2} below).
We find that a barrier-strategy is optimal and that reinsurance should be bought when the insurance company's surplus process is sufficiently large, in particular larger than an endogenously determined trigger level (free boundary) that depends on the model's parameters. Interestingly, we observe that the solution to our problem is consistent with that of \cite{EisenbergSchmidli2009}, where, given the absence of a fixed transaction cost, reinsurance is bought immediately. Namely, the optimal retention level in our model is the same as that in \cite{EisenbergSchmidli2009}, and, when the fixed cost $K\downarrow 0$, the free boundary converge to zero as well, implying that immediate reinsurance is in fact optimal.

We finally illustrate our results in the two relevant cases of proportional and excess-of-loss reinsurance, when the distribution of the claims' sizes are Exponential or Pareto with parameters $(\zeta,\alpha)$, for some $\alpha>2$ and $\zeta>0$. We solve numerically the equations that uniquely determine the optimal retention level and the free boundary and we study the dependency of those quantities with respect to relevant model's parameters. We observe that both the optimal retention level and the free boundary exhibit a monotonic behavior with respect to the considered parameters and we provide explanations of these findings. Furthermore, we show (for fixed values of the model's parameters) that, when the claim's size is exponentially distributed, the value function one has in the case of proportional reinsurance is smaller than the one related to an excess-of-loss reinsurance, while no uniform comparison can be made in the case of Pareto-distributed claim's size. 

The rest of the paper is organized as follows. Section \ref{sec:setting} presents the problem, which is then solved in Section \ref{sec:sol}. Section \ref{sec:applications} illustrates numerically the theoretical findings in the case of proportional and excess-of-loss reinsurance, while a final appendix collects most of the technical proofs of the paper.

\section{Problem Formulation}
\label{sec:setting}

Let $(\Omega, \mathcal F, \mathbb F:= (\mathcal F_t)_{t \ge 0}, \mathbb P)$ be a complete probability space, rich enough to accommodate a one-dimensional
$\mathbb F$-Brownian motion $(W_t)_{t\ge 0}$ and an independent square-integrable random variable $Z$, taking values in $\mathcal Z \subset \mathbb R_+$,
and with law $\nu_Z$ under $\mathbb P$.
Within this probabilistic setting, we consider the unaffected surplus process $(\widehat{X}_{t})_{t\geq 0}$ of an insurance company, with initial value $\widehat{X}_{0}=x>0$, evolving through the diffusion approximation of the classical Cram\'er-Lundberg model (see, e.g., Appendix D in \cite{Schmidli2008} or Section 8 in Chapter IV of \cite{AsmussenSteffensen})
\begin{eqnarray}\label{dinamo}
\widehat{X}_t^{x}= x+   \lambda \eta\mu t + \sigma \sqrt{\lambda } W_t, \quad t \geq 0. 
\end{eqnarray}
Here, $\mu:=\int_{\mathcal Z} z \nu_Z(dz)>0$ and  $\sigma^2:=\int_{\mathcal Z} z^2 \nu_Z(dz)>0$ are, respectively,  the mean and the second moment of the generic claim size $Z$, $\lambda$ is the arrival time parameter of the claims, $\eta$ is the safety loading.

In order to avoid bankruptcy, investors are asked by the insurance company to inject capital whenever the surplus level attempts to become negative. Assuming that investors are impatient agents, it is clear that those injections of capital are made only when strictly necessary. The cumulative amount of capital injections 
$(I_t)_{t \ge 0}$ will then reflect (\`a la Skorokhod) the surplus process at $x=0$, so that the resulting dynamics are
 \begin{eqnarray}\label{dinamop}
X_t^{x}= x+   \lambda \eta\mu t + \sigma\sqrt{\lambda} W_t+I_{t} = \widehat{X}_t^{x}+I_t, \quad t\geq 0,
\end{eqnarray}
with
$$
I_t=\sup_{0 \le s \le t} \left[-\widehat{X}_s^x\right]^+, \quad t\geq 0.
$$

Within this model, we consider the possibility for the insurer of adopting a reinsurance strategy. More precisely, we  consider a continuous function
$$
r: \mathcal{Z} \times [0,1]\to \mathbb{R}^{+},
$$
which represents the retention level of the insurer --- that is, the part of risk remaining in her charge --- whose value depends on the chosen level $b\in [0,1]$. 
It is assumed $r(z,\cdot)$ is non decreasing and that $b=1$ corresponds to no reinsurance and $b=0$ corresponds to full reinsurance; that is, 
\begin{eqnarray}\label{rz1}
r(z,1)=z, \ \ \ r(z,0)=0.
\end{eqnarray}
Typical examples are the case of proportional reinsurance, for which
\begin{eqnarray}\label{propReins}
r(z,b)=bz,
\end{eqnarray}
and that of the excess-of-loss reinsurance, for which 
\begin{eqnarray}\label{excessOfLossReins}
r(z,b)= z\wedge \left(\frac{b}{1-b}\right).
\end{eqnarray}

\begin{remark}
It is worth noticing that choosing the reinsurance parameter $b \in [0,1]$ allows us to cover the relevant reinsurance models using a unique parametrization within
a unified setting. In the case of excess-of-loss reinsurance (cf.\ \eqref{excessOfLossReins} above), this leads to a deviation from the classical formula of the reinsurance retention level which assumes $b\geq0$ (cf.\ \cite{Schmidli2008}).  
\end{remark}

In our model, we assume that the reinsurance policy is irreversible. This means that at a properly picked $\mathbb F$-stopping time $\tau$ the insurer chooses the
level $b_{\tau}$, which will then be kept from time $\tau$ on. Formally, the reinsurance policy is thus a couple 
$$
a:=(\tau,b_{\tau})\in \mathcal{A}:=\mathcal{T}\times \mathcal{M}_{\mathcal{F}_\tau},$$ where
$$
\mathcal{T}:=\{\tau:\Omega\to\mathbb{R}^{+} \ \mathbb F\mbox{-stopping time}\}, \ \ \ \ \mathcal{M}_{\mathcal{F}_\tau}:=\{B:(\Omega, \mathcal{F}_{\tau})\to[0,1] \ \mbox{measurable}\}.
$$  
To implement the reinsurance strategy $a$, the insurer faces a fixed transaction cost $K\ge 0$ at the time $\tau \in \mathcal T$ at which the 
reinsurance contract is signed. From time $\tau$ on, according to the expected value principle, the insurer pays to the reinsurance company a perpetual premium rate with value
\begin{eqnarray}\label{formula1}
\theta\lambda (\mu- M_1(b_{\tau})), \ \ \ \ \mbox{where} \ M_{1}(b):=\int_{\mathcal Z}r(z,b)\nu_Z(dz),
\end{eqnarray}
for $\theta>\eta$. On the other hand, the risk exposure of the insurance company is reduced, leading to the diffusion coefficient
\begin{eqnarray}\label{formula2}
\lambda M_{2}(b_\tau), \ \ \ \mbox{where} \ \ M_{2}(b):=\int_{\mathcal Z}r(z,b)^2\nu_Z(dz).
\end{eqnarray}
In particular, from \eqref{rz1} it follows that $M_1({1})=\mu$ and $M_2({1})=\sigma^2$.
Consequently, for $t\ge \tau$, the insurer only faces the outflows relative to her part of risk, represented by the retention level $r$. All in all, the dynamics with capital injection of the surplus process after time $\tau$ are
\begin{eqnarray}\label{surplus}
X^{x}_{t}= X^{x}_{\tau^{-}}-K+ \lambda(\theta M_1(b_{\tau}) - (\theta - \eta)\mu)(t-\tau)+ \sqrt{\lambda M_2(b_\tau)}(W_{t}-W_{\tau}) + I_{t}, \ \ \ t\geq \tau,
\end{eqnarray}
where $(I_{t})_{t\ge 0}$ is now such that
$$
I_{t}=\sup_{\tau\leq s\leq t}\left[-\left(X^{x}_{\tau^{-}}-K+ \lambda(\theta M_1(b_{\tau}) - (\theta - \eta)\mu)(t-\tau)+ \sqrt{\lambda M_2(b_\tau)}(W_{t}-W_{\tau}) \right)\right]^{+}, \ \ \ t\geq \tau.
$$
In the sequel, in order to stress the dependency of $I$ on the reinsurance policy and $x$, we shall write $I^{x,a}$, when needed.

Following \cite{EisenbergFabrykowskiSchmeck}, \cite{EisenbergSchmidli2009}, and \cite{EisenbergSchmidli2011}, we assume that the insurance company employs the expected total amount of discounted capital injections as a measure of risk and thus aims at determining an admissible irreversible reinsurance policy $a^*\in\mathcal{A}$ such that
$$a^* \in \argmin_{a\in \mathcal A} \E\left[\int_0^\infty e^{-\rho t} dI_t^{x,a}\right],$$
where $\rho>0$ is a subjective intertemporal discount rate.
For future frequent use we also define
\begin{eqnarray}
\label{eq:defU}
U(x) := \inf_{a\in \mathcal A} \E\left[\int_0^\infty e^{-\rho t} dI_t^{x,a}\right], \quad x\geq 0.
\end{eqnarray}

\section{Solution to the problem}
\label{sec:sol}

In this section, we determine the explicit solution to \eqref{eq:defU}. To accomplish that, we shall first reformulate the problem in an handier way (cf.\ Section \ref{sec:reformulation}), then we shall obtain the optimal level (cf.\ Section \ref{sec:optlevel}) and, finally, the optimal time for reinsurance (cf.\ Section \ref{sec:opttime}).

\subsection{Reformulation of the problem}
\label{sec:reformulation}

In order to obtain an handy representation of $U$, we compute the injection costs associated to a fixed retention parameter $b\in[0,1]$ taken at $t=0$;  
that is, given $y \in \mathbb R$ and $b\in[0,1]$, we calculate
\begin{eqnarray}\label{Gb}
G_b(y):=\E\left[\int_{0}^{\infty}e^{-\rho t}dH_{t}^{y,b}\right],
\end{eqnarray}
where
\begin{eqnarray}\label{Hyb}
H^{y,b}_{t}:=\sup_{0\leq s\leq t}\left[-Y^{y,b}_s\right]^{+}, \quad t\geq 0,
\end{eqnarray}
with
$$
Y^{y,b}_{t}:= y+ \lambda(\theta M_1(b) - (\theta - \eta)\mu)t+ \sqrt{\lambda M_2(b)}\,\widetilde{W}_{t}, \ \ \ t\geq 0,
$$
for another $\mathbb F$-Brownian motion $(\widetilde{W}_t)_{t \ge 0}$.

Following \cite{Shreve}, we know that, when $y\geq 0$, the function $G_b$ is the solution to the differential problem
\begin{eqnarray}\label{eqDiff}
\left\{\begin{array}{lll}
\displaystyle{\frac{1}{2}\lambda M_2(b){G_b''}(y)
+\lambda(\theta M_1(b)- (\theta - \eta)\mu){G_b'}(y)-\rho G_b(y)=0},\\ \\
\displaystyle{{G_b'}(0)=-1}, \ \ \ \ 
\lim_{y \to +\infty}G_b(y)=0.
\end{array}\right.
\end{eqnarray}
It then follows from \eqref{eqDiff} that 
\begin{eqnarray}\label{GbxAnalyticPos}
G_b(y) = -\frac{1}{\gamma^-(b)}e^{\gamma^-(b)y}, \ \ \ \forall y\geq 0,
\end{eqnarray}
where $\gamma^-(b)<0$ is the negative solution to 
 the equation $\Phi(b,\gamma)=0$, with
\begin{eqnarray}\label{pol}
\Phi(b,\gamma):=\frac{1}{2}\lambda M_2(b) \gamma^2+
\lambda\left(\theta M_1(b)- (\theta - \eta)\mu\right)\gamma-\rho, \quad \gamma \in  \mathbb R.
\end{eqnarray}
On the other hand, we have
\begin{eqnarray}\label{GbxAnalyticNeg}
G_b(y)=-y+G_{b}(0)= -y-\frac{1}{\gamma^-(b)}, \ \ \ \forall y< 0.
\end{eqnarray}

With the help of the previously defined quantities (cf.\ \eqref{Gb} and \eqref{Hyb}), an application of the strong Markov property allows us to rewrite $U$ as follows:
\begin{eqnarray*}
U(x)&=& \inf_{a\in \mathcal A} {\E} \left[\int_0^{\tau^{-}} e^{-\rho t} dI_t^{x,a} + \int_{\tau^{-}}^\infty e^{-\rho t} dI_t^{x,a}\right]\\
&=& \inf_{a\in \mathcal A} {\E} \left[\int_0^{\tau^{-}} e^{-\rho t} dI_t^{x,a} +\E\left[\int_{\tau^{-}}^\infty e^{-\rho t} dI_t^{x,a}\ \big|\ \mathcal{F}_{\tau^-}\right]\right]\\
&=& \inf_{a\in \mathcal A} {\E} \left[\int_0^{\infty} e^{-\rho t} dH^{x,1}_t -\int_{\tau^{-}}^{\infty} e^{-\rho t} dH_t^{x,1}+\E\left[\int_{\tau^{-}}^\infty e^{-\rho t} dI_t^{x,a}\ \big|\ \mathcal{F}_{\tau^-}\right]\right]\\
 &=&G_1(x)+  \inf_{a \in \mathcal A} {\E}\left[e^{-\rho\tau}\left(G_{b_\tau}(X_{\tau^{-}}^{x}-K) -G_1(X_{\tau^{-}}^{x})\right)\right]\\
  &=:&G_1(x)+  \inf_{a \in \mathcal A} \mathcal{J}(x,a).
\end{eqnarray*}
Letting 
\begin{eqnarray}\label{OP}
V(x) := \inf_{a \in \mathcal A} \mathcal{J}(x, a),
\end{eqnarray}
where 
\begin{eqnarray}\label{fb}
\mathcal{J}(x,a):= \E\left[e^{-\rho\tau}f_{b_{\tau}}(X^x_{\tau^{-}})\right], \ \ \ \ 
f_{b}(y):= G_b(y-K) - G_1(y),
\end{eqnarray}
we have
\begin{eqnarray}\label{VG1U}
U(x)=G_{{1}}(x) + V(x).
\end{eqnarray}
We now continue our analysis by determining the optimal $a^*=(\tau^*,b^*)$ s.t.\ $V(x) =  \mathcal{J}(x, a^*)$. Clearly, such an $a^*$ will also be optimal for \eqref{eq:defU}.


\subsection{Optimal reinsurance}
\label{sec:optlevel}

Recall that the function $\gamma^-(b)$ has been defined as the negative solution to the equation $\Phi(b,\cdot)=0$, 
with $\Phi$ as in \eqref{pol}. Its explicit expression is
\begin{eqnarray}\label{gammaMeno}
\gamma^-(b) = \displaystyle{-\frac{(\theta M_1(b)-(\theta -\eta)\mu) + \sqrt{(\theta M_1(b)-(\theta -\eta)\mu)^2+2\frac{\rho M_2(b)}{\lambda}}}{M_2(b)}}, 
\ \ \ \ \ \ \ b \in [0,1].
\end{eqnarray}

We denote 
$$
\mathcal{B}^*\ :=\ \textrm{argmin}_{b \in[0,1]}\ \gamma^-(b)\ .
$$
A relevant fact is that 
\begin{eqnarray}\label{minGb}
b^* \in \mathcal{B}^* \quad \Longrightarrow \quad G_{b^*}(x)=\min_{b \in [0,1]}G_b(x), \quad \forall x \ge 0. 
\end{eqnarray}
Indeed,
$$
G_b(x)=H(\gamma^-(b),x),
$$
where $H: \mathbb R^- \times \mathbb R^+ \to \mathbb R$ is defined by 
$$
H(g,x):=-\frac{1}{g} e^{gx}.
$$
Since 
$$
\frac{\partial H}{\partial g}(g,x) =  \frac{1}{g^2}e^{gx}(1-gx)> 0,\ \ \ \ \ \forall (g,x) \in \mathbb R^- \times \mathbb R^+,
$$
we see that $H$ is strictly increasing with respect to the first variable. It follows that $b \mapsto G_b(x)$ is minimized by the minimizers of $\gamma^-$.

\begin{remark}
\label{rem:Schmidli}
It is worth noticing that the function $\gamma^-(b)$, $b \in [0,1]$, as defined in \eqref{gammaMeno} coincides with the opposite of the function $\beta(b)$, $b \in [0, \widetilde{b}]$, defined in \cite{EisenbergSchmidli2009} (when $\widetilde b=1$). In particular, up to a parametrization, any optimizer $b^*$ of $\gamma^-$ on $[0,1]$ does also optimize $\beta$ in \cite{EisenbergSchmidli2009}, and viceversa. We shall see in the next Theorem that, as in \cite{EisenbergSchmidli2009}, optimizers of $\gamma^-$ actually give the optimal level to be adopted. The optimal timing for reinsurance is then determined given the optimal level $b^*$ (see Section \ref{sec:opttime} below).
\end{remark}

The next result shows how to reduce the solution to \eqref{OP} to a pure optimal timing problem.
\begin{theorem}\label{verification}
Recall \eqref{OP} and \eqref{fb}. Let  $b^*\in\mathcal{B}^{*}$  and let 
$\tau^*(b^*)\in\mathcal{T}$ such that
$$ 
\tau^*(b^*) \in \textrm{argmin}_{\tau \in \mathcal T} \E\left[e^{-\rho\tau}f_{b^*}(X^{x}_{\tau^{-}})\right]
=  \textrm{argmin}_{\tau \in \mathcal T} \mathcal J(x,(\tau,b^*)),
$$
with the convention $e^{-\rho\tau}f_{b^*}(X^{x}_{\tau})=0$ on $\{\tau=\infty\}$. 
Then, the couple $a^{*}:=(\tau^*(b^*),b^{*})\in\mathcal{A}$ is an optimal reinsurance strategy (with $b^{*}$ thought of as a constant random variable).
\end{theorem}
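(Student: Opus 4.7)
The strategy is to decouple the two optimization decisions in \eqref{OP} by means of a pointwise argument, using the fact that $b^*$ minimizes $G_b$ at every point. Since $f_b(y)=G_b(y-K)-G_1(y)$ depends on $b$ only through $G_b(y-K)$, and since the inner decision $b_\tau\in\mathcal M_{\mathcal F_\tau}$ enters the cost only through $f_{b_\tau}(X^x_{\tau^-})$, one should be able to select $b_\tau\equiv b^*$ pointwise without loss of optimality; the problem then reduces to the pure optimal stopping problem defining $\tau^*(b^*)$.

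I would first strengthen \eqref{minGb} by verifying that $G_{b^*}(y)\le G_b(y)$ holds for every $b\in[0,1]$ \emph{and every} $y\in\mathbb R$, not just for $y\ge 0$. For $y\ge 0$ this is precisely \eqref{minGb}, which follows from the fact that the map $g\mapsto H(g,y)=-g^{-1}e^{gy}$ on $\mathbb R^-$ is strictly increasing in $g$, hence minimized when $\gamma^-(b)$ attains its minimum. For $y<0$, from \eqref{GbxAnalyticNeg} one has $G_b(y)=-y-1/\gamma^-(b)$; since $\gamma^-(b)<0$, the quantity $-1/\gamma^-(b)>0$ is minimized exactly when $\gamma^-(b)$ is most negative, i.e.\ when $b\in\mathcal B^*$. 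Both cases also have to be checked to be consistent at $y=0$, which is immediate from $G_b(0)=-1/\gamma^-(b)$. This handles the case $X^x_{\tau^-}-K<0$, which is relevant whenever capital injections are triggered by the fixed cost $K$ itself.

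Now take any admissible $a=(\tau,b_\tau)\in\mathcal A$. By the pointwise inequality just established, applied with $y=X^x_{\tau^-}-K$, we obtain on $\{\tau<\infty\}$
\begin{equation*}
f_{b_\tau}(X^x_{\tau^-})=G_{b_\tau}(X^x_{\tau^-}-K)-G_1(X^x_{\tau^-})\ \ge\ G_{b^*}(X^x_{\tau^-}-K)-G_1(X^x_{\tau^-})=f_{b^*}(X^x_{\tau^-}),
\end{equation*}
while on $\{\tau=\infty\}$ both sides are $0$ by the stated convention. Multiplying by $e^{-\rho\tau}\ge 0$ and taking expectations yields $\mathcal J(x,a)\ge \mathcal J(x,(\tau,b^*))$. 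Since $\tau^*(b^*)$ minimizes $\tau\mapsto\mathcal J(x,(\tau,b^*))$ by its definition, we get $\mathcal J(x,(\tau,b^*))\ge \mathcal J(x,(\tau^*(b^*),b^*))=\mathcal J(x,a^*)$. Taking the infimum over $a\in\mathcal A$ proves $V(x)=\mathcal J(x,a^*)$, so $a^*$ is optimal for \eqref{OP} and, via \eqref{VG1U}, for \eqref{eq:defU} as well.

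There is no real analytic obstacle here: the argument is essentially a "measurable selection"-type decoupling, and the only slightly delicate point is extending \eqref{minGb} to the region $y<0$ so that the inequality can be applied $\omega$-wise to the random argument $X^x_{\tau^-}-K$, together with a careful reading of the convention $e^{-\rho\tau}f_{b^*}(X^x_{\tau^-})=0$ on $\{\tau=\infty\}$ to ensure that the inequality $\mathcal J(x,a)\ge\mathcal J(x,(\tau,b^*))$ survives the possibility of never reinsuring.
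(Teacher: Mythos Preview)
Your proof is correct and follows essentially the same decoupling argument as the paper: use the pointwise minimality of $G_{b^*}$ to replace the random $b_\tau$ by the constant $b^*$, then invoke the optimality of $\tau^*(b^*)$ for the resulting stopping problem. In fact your version is slightly more careful than the paper's, since you explicitly extend \eqref{minGb} to $y<0$ via \eqref{GbxAnalyticNeg}, which is needed because the argument $X^x_{\tau^-}-K$ may be negative---a point the paper's proof applies \eqref{minGb} to without comment.
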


\begin{proof}
Since $G_{b^*}(x)=\min_{b \in [0,{1}]} G_b(x)$ (see \eqref{minGb}) then
\begin{eqnarray*}
U(x)&=&G_{{1}}(x) + V(x)\\
&\ge& G_{{1}}(x) + \inf_{\tau \in \mathcal T} {\E} \left[ e^{-\rho \tau} \left(G_{b^*}(X^x_{\tau^{-}}-K)-G_{{1}}(X^x_{\tau^{-}})\right)\right]\\
&=&G_{{1}}(x)+\inf_{\tau \in \mathcal T} \mathcal{J}(x,(\tau,b^*))\\
&=&G_{{1}}(x)+\mathcal{J}(x,(\tau^*(b^*),b^*)).
\end{eqnarray*}
On the other hand
\begin{eqnarray*}
U(x)&=&G_{{1}}(x) + V(x)\\
&\le& G_{{1}}(x) + {\E} \left[ e^{-\rho \tau^*(b^*)} \left(G_{b^*}(X^x_{\tau^*(b^*)^{-}} -K)-G_{{1}}(X^x_{\tau^*(b^*)^{-}})\right)\right] \\
&=& G_{{1}}(x)+\mathcal{J}(x,(\tau^*(b^*),b^*)).
\end{eqnarray*}
Consequently $U(x) = G_{{1}}(x)+\mathcal{J}(x,(\tau^*(b^*),b^*))$ and $(\tau^*(b^*),b^*)$ is optimal.
\end{proof}

Theorem \ref{verification} provides sufficient conditions needed to identify an optimal reinsurance parameter~$b^*$. If $b^{*}\in\mathcal{B}^{*}$, then the level  corresponding to a random variable with constant value belonging to the set $\mathcal{B}^{*}$ is the second component of an optimal reinsurance strategy.  
%
%
%
%
%
%


\subsection{Optimal reinsurance timing} 
\label{sec:opttime}

Given Theorem \ref{verification}, in order to solve the optimization problem~\eqref{OP} we need to solve, for a fixed  $b^*\in\mathcal{B}^{*}$, 
the optimal stopping problem (cf.\ \eqref{fb})
\begin{eqnarray}\label{problem}
F_{b^*}(x):=\inf_{\tau\in\mathcal{T}} \mathcal{J}(x,(\tau,b^*))=
\inf_{\tau \in \mathcal T}  \E\left[e^{-\rho\tau}f_{b^*}(X^x_{\tau^{-}})\right], \ \ \ x\in[0,\infty).
\end{eqnarray}
Before addressing problem \eqref{problem} we collect properties of the obstacle function $f_{b^*}$.
 \begin{proposition}\label{propfb}
 The following hold true:
 \begin{enumerate}[(a)] 
 \item 
 $f_{b^*}$ is strictly decreasing in $[0,\hat x_{b^*}]$ and strictly increasing in $[\hat x_{b^*},\infty)$, where 
 \begin{eqnarray}\label{xHat}
 \displaystyle{\hat x_{b^*}:=\frac{\gamma^-(b^*)}{\gamma^-(b^*) - \gamma^-(1) }}K \in (K,\infty). 
 \end{eqnarray}
 \item $\lim_{x \to \infty} f_{b^*}(x)=0$.
 \item  $f_{b^*}$ is bounded. Precisely, 
we have the 
following two cases: \medskip
\begin{enumerate}[(i)]
\item If $\displaystyle{-K \gamma^-(1) \gamma^-(b^*) \le\gamma^-(b^*) - \gamma^-(1)<0 }$, then  $f_{b^*}(0)\geq 0$
(\footnote{With $f_{b^*}(0)=0$ if and only if $\displaystyle{-K= \frac{\gamma^-(b^*) - \gamma^-(1)}{\gamma^-(1) \gamma^-(b^*)}}$.}), 
and  
$$
\displaystyle{-\frac{\gamma^-(1)-\gamma^-(b^*)}{\gamma^-(1)\gamma^-(b^*)}e^{-\frac{\gamma^-(1)\gamma^-(b^*)}{\gamma^-(1)-\gamma^-(b^*)}} 
\ \le \ f_{b^*}(x) \le 
K+\frac{\gamma^-(b^*) - \gamma^-(1)}{\gamma^-(1) \gamma^-(b^*)},  \ \ \ \forall x\in[0,\infty).}
$$
\item If $\displaystyle{\gamma^-(b^*) - \gamma^-(1)}<-K \gamma^-(1) \gamma^-(b^*)$, then   $f_{b^*}(0)<0$ and  
$$
\displaystyle{-\frac{\gamma^-(1)-\gamma^-(b^*)}{\gamma^-(1)\gamma^-(b^*)}e^{-\frac{\gamma^-(1)\gamma^-(b^*)}{\gamma^-(1)-\gamma^-(b^*)}} 
\ \le \ f_{b^*}(x) < 0,  \ \ \ \forall x\in[0,\infty).}
$$
\end{enumerate}
\end{enumerate}
\end{proposition}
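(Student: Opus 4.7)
The plan is to expand $f_{b^*}(x) = G_{b^*}(x-K) - G_1(x)$ explicitly, using the piecewise closed form \eqref{GbxAnalyticPos}-\eqref{GbxAnalyticNeg}, and then read off the three claims by direct one-variable calculus. Writing $\gamma^* := \gamma^-(b^*)$ and $\gamma_1 := \gamma^-(1)$ for brevity (both strictly negative, with $\gamma^* \leq \gamma_1$ by definition of $\mathcal{B}^*$; I shall tacitly work in the non-degenerate regime $\gamma^* < \gamma_1$, since otherwise no reinsurance policy actually alters the diffusion), I would split
$$
f_{b^*}(x) =
\begin{cases}
(K-x) - \dfrac{1}{\gamma^*} + \dfrac{1}{\gamma_1}\, e^{\gamma_1 x}, & x \in [0,K], \\[6pt]
-\dfrac{1}{\gamma^*}\, e^{\gamma^*(x-K)} + \dfrac{1}{\gamma_1}\, e^{\gamma_1 x}, & x > K,
\end{cases}
$$
which one checks to be $C^1$ at $x = K$ (the derivative equals $-1 + e^{\gamma_1 K}$ from either side).

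For part (a), I would differentiate piecewise. On $[0,K]$, $f_{b^*}'(x) = -1 + e^{\gamma_1 x} < 0$ for $x>0$ because $\gamma_1 < 0$; on $(K,\infty)$, $f_{b^*}'(x) = -e^{\gamma^*(x-K)} + e^{\gamma_1 x}$ vanishes exactly when $\gamma^*(x-K)=\gamma_1 x$, yielding the unique critical point $\hat x_{b^*} = \gamma^* K/(\gamma^* - \gamma_1)$ of \eqref{xHat}. The strict ordering $\gamma^* < \gamma_1 < 0$ forces $\hat x_{b^*} > K$, and a second-derivative evaluation at $\hat x_{b^*}$ gives $(\gamma_1-\gamma^*)\, e^{\gamma_1 \hat x_{b^*}} > 0$, identifying a strict local minimum. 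Monotonicity on $(K,\hat x_{b^*})$ and $(\hat x_{b^*},\infty)$ then follows by continuity of $f_{b^*}'$, and concatenation with the behaviour on $[0,K]$ proves (a). Part (b) is immediate, since both exponentials vanish as $x\to\infty$.

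For part (c), combining (a) and (b), the infimum of $f_{b^*}$ on $[0,\infty)$ is attained at $\hat x_{b^*}$, while the supremum equals either $f_{b^*}(0)$ (whenever this is non-negative) or the limit $0$ at infinity (whenever $f_{b^*}(0) < 0$). A direct substitution gives $f_{b^*}(0) = K + (\gamma^* - \gamma_1)/(\gamma_1\gamma^*)$, and because $\gamma_1\gamma^* > 0$ the sign condition $f_{b^*}(0) \geq 0$ rearranges precisely to the dichotomy (i)/(ii). At the critical point, using the identity $e^{\gamma^*(\hat x_{b^*}-K)} = e^{\gamma_1 \hat x_{b^*}}$ one finds
$$
f_{b^*}(\hat x_{b^*}) \;=\; \left(\frac{1}{\gamma_1} - \frac{1}{\gamma^*}\right) e^{\gamma_1 \hat x_{b^*}} \;=\; -\frac{\gamma_1-\gamma^*}{\gamma_1\gamma^*}\, \exp\!\left(-\frac{\gamma_1\gamma^* K}{\gamma_1-\gamma^*}\right),
$$
which is the claimed lower bound; the upper bounds in (i)/(ii) then follow immediately.

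There is no genuine conceptual obstacle: the whole argument is elementary calculus on two glued branches, and the work lies in carefully tracking the uniformly negative signs of $\gamma^*, \gamma_1$ and their ordering. The only points demanding momentary care are the verification of $\hat x_{b^*} > K$ and that the critical point on $(K,\infty)$ is a genuine minimum rather than an inflection, both of which rest on the strict inequality $\gamma^* < \gamma_1$ that one expects in any non-trivial reinsurance problem.
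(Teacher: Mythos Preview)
Your proof is correct and essentially identical to the paper's: both write out the piecewise explicit form of $f_{b^*}$, differentiate to locate the unique critical point $\hat x_{b^*}$, and read off the bounds from the values at $0$, $\hat x_{b^*}$, and $+\infty$. Incidentally, your computed minimum value $-\tfrac{\gamma_1-\gamma^*}{\gamma_1\gamma^*}\exp\!\bigl(-\tfrac{\gamma_1\gamma^* K}{\gamma_1-\gamma^*}\bigr)$ correctly carries a factor $K$ in the exponent that appears to be a typo (missing $K$) in the paper's displayed lower bound.
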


\begin{proof} 
See Appendix.
\end{proof}

\begin{figure}[htb!]
\begin{eqnarray*}
\begin{array}{ccc}
\includegraphics[width=0.45\textwidth]{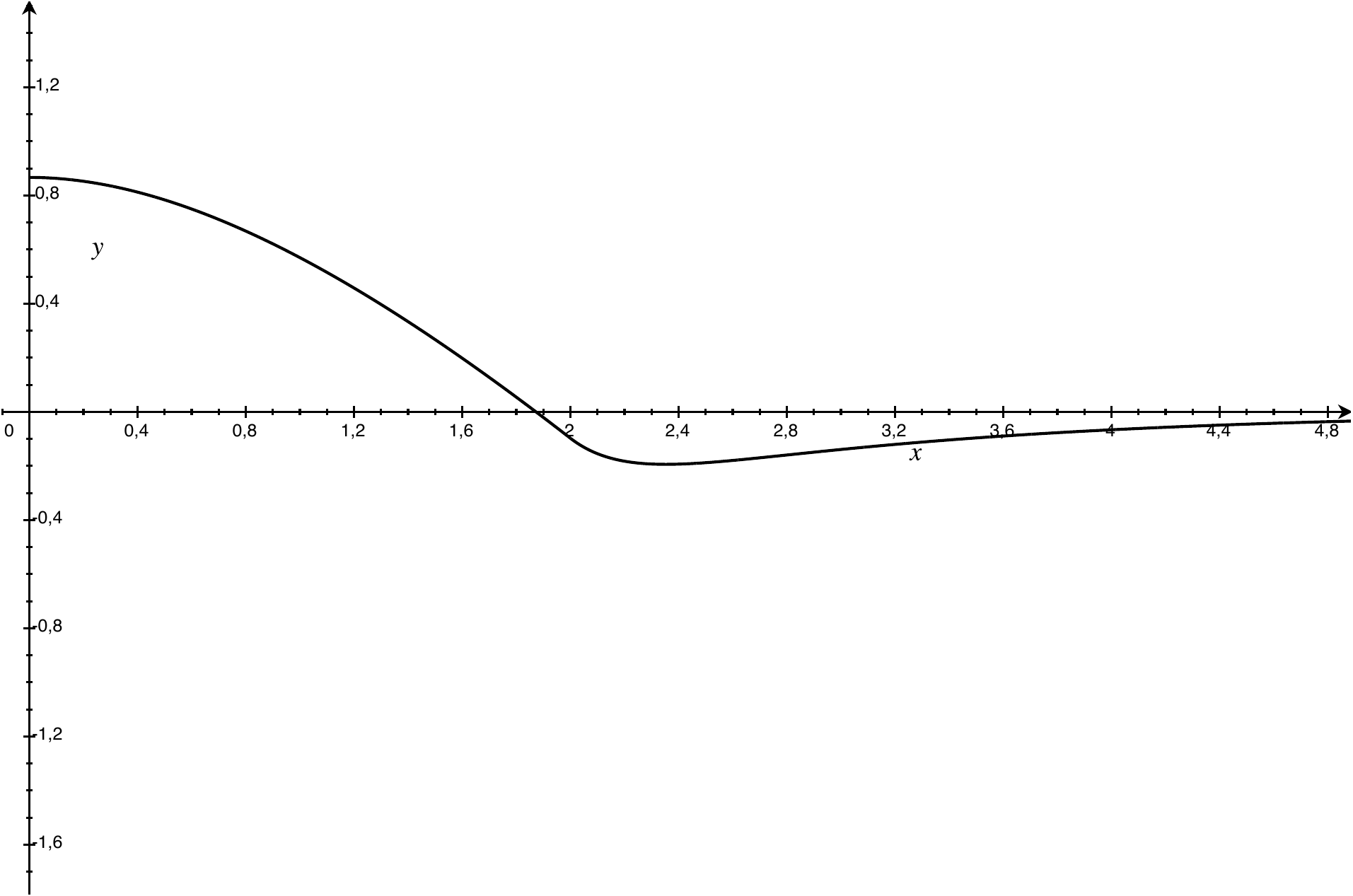} & \qquad \qquad
\includegraphics[width=0.45\textwidth]{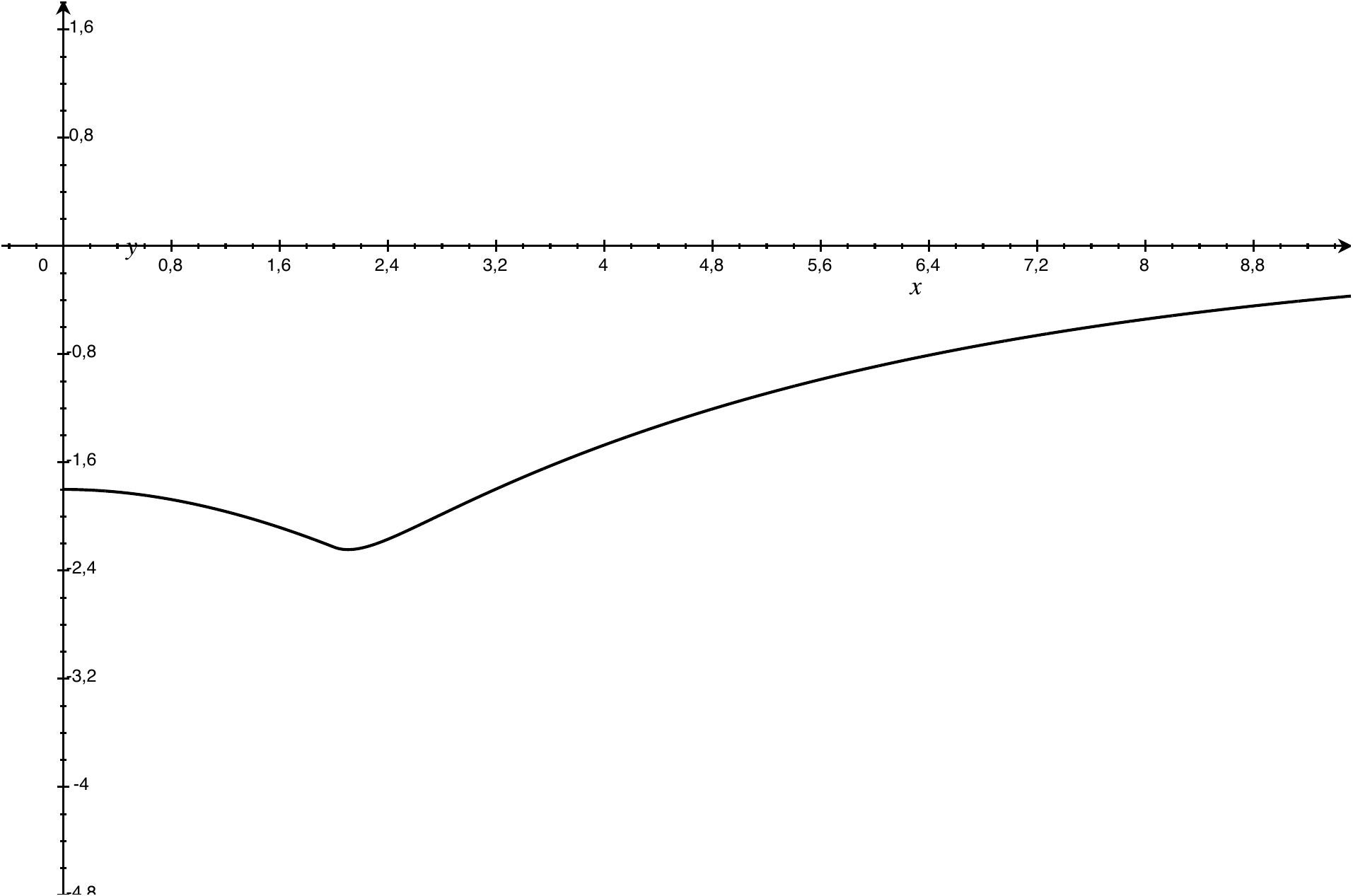}\\
\textrm{Case (i)}: \displaystyle{-K \gamma^-(1) \gamma^-(b^*) \le\gamma^-(b^*) - \gamma^-(1)<0 } &
\textrm{Case (ii)}: \displaystyle{\gamma^-(b^*) - \gamma^-(1)<-K \gamma^-(1) \gamma^-(b^*) }
\end{array}
\end{eqnarray*}
\caption{Function $f_{b^*}$ in the case $b^* \in [0,1)$.}\label{fig2}
\end{figure}

From Proposition \ref{propfb} we see that, if $1 \in \mathcal B^*$, then $0 \le f_1(x) \le K$, $\forall x\ge 0$. This in turn leads to the fact that it it is never optimal to start the reinsurance contract and $F_1 \equiv 0$ (cf.\ \eqref{problem}). Hence, in the rest of the section, we assume that $1 \not\in \mathcal B^*$.

Problem \eqref{problem} is a one-dimensional optimal stopping problem for a reflected diffusion that can be addressed by techniques of the 
Dynamic Programming Principle. To that end, set
\begin{eqnarray}\label{LX1}
\mathcal L\varphi :=\frac{1}{2}\lambda \sigma^2 \varphi''
+\lambda\eta\mu\varphi', \ \ \ \varphi\in C^{2}((0,\infty);\R). 
\end{eqnarray}
By classical dynamic programming arguments, the optimization problem \eqref{problem} is expected to be associated to the variational problem:
\begin{equation}\label{VI}
\begin{cases}
\min\left\{(\mathcal{L}-\rho)w(x), \ f_{b^*}(x)-w(x)\right\}=0, \ \ \ x\in(0,\infty),\\
w'(0)=0.
\end{cases}
\end{equation}

As a matter of fact, one has the following verification theorem. 
\begin{theorem}\label{th:ver}
Let $w\in W^{2,\infty}_\textrm{loc}([0,\infty);\R)$ be a bounded solution to \eqref{VI} and define the reinsurance region
$$\mathcal{R}:=\{x\in[0,\infty): \ w(x)\ge f_{b^*}(x)\}.$$
 Then $w=F_{b^*}$ and the entry time 
$$\tau^{*}(b^*):=\inf\{t\geq 0: \ X_t^{x}\in \mathcal{R}\}$$
(with the convention $\inf\emptyset=\infty$)
is the optimal reinsurance time for \eqref{OP}; that is, one has $F_{b^*}(x)=\mathcal{J}(x,(\tau^{*}(b^*), b^*))$.
\end{theorem}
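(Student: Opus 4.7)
The plan is to run the standard verification argument for optimal stopping, adapted to a reflected diffusion and to a candidate $w$ of Sobolev (rather than classical $C^2$) regularity. First I would fix $x\in[0,\infty)$ and apply a generalized It\^o formula to $e^{-\rho t}w(X_t^x)$. Since the volatility $\sigma\sqrt{\lambda}$ is constant and non-degenerate and $w\in W^{2,\infty}_{\text{loc}}([0,\infty);\mathbb{R})$, the Krylov extension of It\^o's formula for non-degenerate diffusions applies; combining it with the Skorokhod decomposition of $X^x$ from \eqref{dinamop} yields
\begin{eqnarray*}
e^{-\rho t}w(X_t^x) &=& w(x)+\int_0^t e^{-\rho s}(\mathcal{L}-\rho)w(X_s^x)\,ds \\
&& +\int_0^t e^{-\rho s}w'(X_s^x)\sigma\sqrt{\lambda}\,dW_s+\int_0^t e^{-\rho s}w'(X_s^x)\,dI_s.
\end{eqnarray*}
Because $I$ is nondecreasing and supported on $\{X^x=0\}$ and $w'(0)=0$ by \eqref{VI}, the last integral vanishes identically. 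A localization $\tau_n:=n\wedge \inf\{t\geq 0:X_t^x\geq n\}$ then makes the stochastic integral a true martingale with zero expectation.

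For the lower bound I would fix $\tau\in\mathcal{T}$ and use $(\mathcal{L}-\rho)w\geq 0$ a.e.\ from \eqref{VI} to obtain
\begin{eqnarray*}
w(x)\ \leq\ \E\!\left[e^{-\rho(\tau\wedge\tau_n)}w(X^x_{\tau\wedge\tau_n})\right].
\end{eqnarray*}
Since $w$ is bounded, dominated convergence as $n\to\infty$ gives $w(x)\leq \E[e^{-\rho\tau}w(X_\tau^x)\,;\,\tau<\infty]$, and the event $\{\tau=\infty\}$ contributes nothing (discount factor vanishes, $w$ bounded). Invoking $w\leq f_{b^*}$ from \eqref{VI} and the convention in \eqref{problem} at $\tau=\infty$, I obtain $w(x)\leq \mathcal{J}(x,(\tau,b^*))$, so that $w(x)\leq F_{b^*}(x)$ after taking the infimum over $\tau\in\mathcal{T}$.

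For the reverse inequality and the optimality of $\tau^*(b^*)$, observe that $\mathcal{R}$ is closed by continuity of $w$ and $f_{b^*}$, so $\tau^*(b^*)$ is an $\mathbb{F}$-stopping time. On $\{s<\tau^*(b^*)\}$ one has $X_s^x\notin\mathcal{R}$, that is $w(X_s^x)<f_{b^*}(X_s^x)$, which via \eqref{VI} forces $(\mathcal{L}-\rho)w(X_s^x)=0$. Re-running the It\^o computation along $\tau^*(b^*)\wedge\tau_n$, the generator term now vanishes and equality holds at every $n$; passing $n\to\infty$ with the boundedness of $w$ and using continuity of $X^x$ together with closedness of $\mathcal{R}$ (which gives $X^x_{\tau^*(b^*)}\in\mathcal{R}$, hence $w(X^x_{\tau^*(b^*)})=f_{b^*}(X^x_{\tau^*(b^*)})$ on $\{\tau^*(b^*)<\infty\}$), I obtain
\begin{eqnarray*}
w(x)\ =\ \E\!\left[e^{-\rho\tau^*(b^*)}f_{b^*}(X^x_{\tau^*(b^*)})\right]\ =\ \mathcal{J}(x,(\tau^*(b^*),b^*))\ \geq\ F_{b^*}(x)\ \geq\ w(x),
\end{eqnarray*}
proving simultaneously $w\equiv F_{b^*}$ and the optimality of $\tau^*(b^*)$.

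The main technical obstacle will be legitimizing the It\^o expansion under the minimal $W^{2,\infty}_{\text{loc}}$ regularity of $w$ in the presence of the reflecting boundary at zero: the non-degeneracy of $\sigma\sqrt{\lambda}$ is precisely what enables the Krylov-type extension, while the Neumann condition $w'(0)=0$ is exactly what cancels the reflection contribution $\int_0^t e^{-\rho s}w'(X_s^x)\,dI_s$. Everything else is routine; boundedness of $w$ together with Proposition \ref{propfb} disposes of the limit passages as $n\to\infty$ and of the convention at $\tau=\infty$.
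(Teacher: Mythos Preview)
Your proposal is correct and follows essentially the same verification argument as the paper: localize, apply a generalized It\^o/change-of-variable formula to $e^{-\rho t}w(X_t^x)$, use the Neumann condition to kill the $dI$ term, exploit the two halves of the variational inequality for the two inequalities, and pass to the limit via boundedness. The only cosmetic differences are that the paper cites a semimartingale change-of-variable formula (Cai--De Angelis) rather than Krylov's extension, separates the time cutoff $T$ from the spatial localization $\tau_n$, and bounds $w$ by $f_{b^*}$ \emph{before} passing to the limit (invoking boundedness of $f_{b^*}$ from Proposition~\ref{propfb}) rather than after (invoking boundedness of $w$).
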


\begin{proof} 
See Appendix.
\end{proof}

Now, given $b^*\in\mathcal{B}^{*}$, with regards to the properties of $f_{b^*}$ collected in Proposition \ref{propfb}, we expect that 
$\mathcal R= \{x \ge 0 \ : \ x \ge x^*_{b^*}\}$ for some $x^*_{b^*}>0$ and that \eqref{problem} is thus related to 
the following free-boundary problem: Find $(\hat w,x^*_{b^*}) \in C^2((0,x^*_{b^*});\R) \times  \R_+$ such that:
\begin{eqnarray}\label{FB}
\begin{cases}
(\mathcal{L}-\rho)\hat w(x)=0,  \mbox{if} \ x\in (0,x^*_{b^*}),\\\\
\hat w'(0)=0,\\\\
\hat w(x^{*}_{b^*}) = f_{b^*}(x^{*}_{b^*}), \ \  \hat w'(x^{*}_{b^*}) = f'_{b^*}(x^{*}_{b^*}).
\end{cases}
\end{eqnarray}
Note  that the general solution to $(\mathcal{L}-\rho)w=0$ is 
\begin{eqnarray}\label{functionH}
h(x;C_1,C_2)=C_1e^{\gamma^-(1)x}+C_2e^{\gamma^+(1)x}, \quad C_1,C_2 \in \mathbb R,
\end{eqnarray}
where (see \eqref{pol}) $\gamma^-(1), \gamma^+(1)$ are, respectively,  the negative and the positive  solutions to
\begin{eqnarray}\label{pol1}
\Phi(1,\gamma)=\frac{1}{2}\lambda \sigma^2 \gamma^2+
\lambda \eta\mu\gamma-\rho, \quad \gamma \in  \mathbb R.
\end{eqnarray}

The following result characterizes the solution to \eqref{FB}.
\begin{proposition}\label{prop1}
\begin{enumerate}[(i)]
\item[]
\item The equation 
\begin{eqnarray}\label{eqToSolve}
&&\gamma^+(1)(\gamma^-(b^*)-\gamma^-(1))e^{(\gamma^-(b^*)-\gamma^+(1))x}-\gamma^-(1)(\gamma^-(b^*)-\gamma^+(1))e^{(\gamma^-(b^*)-\gamma^-(1))x} \nonumber\\
&&=\gamma^-(b^*)(\gamma^+(1)-\gamma^-(1))e^{\gamma^-(b^*)K}
\end{eqnarray}
admits a unique solution $x^{*}_{b^*}$ in the interval $[0,\infty)$.
\item The function $K  \mapsto x^{*}_{b^*}= x^{*}_{b^*}(K)$ is strictly increasing  and 
$$K\  \le \ x^{*}_{b^*} \ \le \ \hat x_{b^*}:=\displaystyle{\frac{\gamma^-(b^*)}{\gamma^-(b^*) - \gamma^-(1) }}K.$$ 
\item 
The unique solution to \eqref{FB} is 
\begin{eqnarray}\label{wSol}
\hat w(x)=
h(x;C_1,C_2), \quad x\in [0,x^{*}_{b^*}), 
\end{eqnarray}
where
\begin{eqnarray}\label{expressionC1C2_caseI}
C_1:=\displaystyle{\frac{1}{\gamma^-(1)} B(x^*_{b^*})}, \ \ \ \ \ \ \ \ C_2:=\displaystyle{-\frac{1}{\gamma^+(1)}B(x^*_{b^*})},
\end{eqnarray} 
\begin{eqnarray}\label{expressionB}
B(x^*_{b^*}):=\displaystyle{ \gamma^+(1)(\gamma^-(b)-\gamma^-(1))\frac{e^{\gamma^-(1) x^*_{b^*}}}{H(x^*_{b^*})}   } \in (0,1).
\end{eqnarray}
with
\begin{eqnarray}\label{expressionH}
H(x^*_{b^*}):=\gamma^+(1)\left(\gamma^-(b^*)-\gamma^-(1)\right)e^{\gamma^-(1)x^*_{b^*}} - \gamma^-(1)\left(\gamma^-(b^*)-\gamma^+(1)\right)e^{\gamma^+(1)x^*_{b^*}}<0.
\end{eqnarray}
\end{enumerate}
\end{proposition}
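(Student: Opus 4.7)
The plan is to view the three boundary conditions in \eqref{FB} --- the Neumann condition at $0$ together with value- and derivative-matching at $x^*_{b^*}$ --- as three equations for the three unknowns $C_1, C_2, x^*_{b^*}$. Since claim (ii) suggests $x^*_{b^*}\ge K$, near $x^*_{b^*}$ the obstacle admits the explicit representation $f_{b^*}(y)=-\frac{1}{\gamma^-(b^*)}e^{\gamma^-(b^*)(y-K)}+\frac{1}{\gamma^-(1)}e^{\gamma^-(1)y}$. The condition $h'(0;C_1,C_2)=0$ yields $C_2=-\frac{\gamma^-(1)}{\gamma^+(1)}C_1$; plugging this into the two smooth-fit conditions, each produces an expression for $C_1$. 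Equating the two and clearing denominators gives, after a routine rearrangement, exactly equation \eqref{eqToSolve}, so any solution of \eqref{FB} is parametrized by a root $x^*_{b^*}$ of \eqref{eqToSolve}.

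For part (i), denote by $g(x)$ the left-hand side of \eqref{eqToSolve}. Using $\gamma^-(b^*)<\gamma^-(1)<0<\gamma^+(1)$ (the first strict inequality comes from the standing assumption $1\notin\mathcal{B}^*$), the product $(\gamma^-(b^*)-\gamma^-(1))(\gamma^-(b^*)-\gamma^+(1))$ is positive. Factoring this product out of $g'(x)$ leaves the bracket $\gamma^+(1)e^{(\gamma^-(b^*)-\gamma^+(1))x}-\gamma^-(1)e^{(\gamma^-(b^*)-\gamma^-(1))x}$, a sum of two positive terms, so $g'>0$ on $[0,\infty)$. Direct evaluation gives $g(0)=\gamma^-(b^*)(\gamma^+(1)-\gamma^-(1))<0$ and $g(x)\to 0^-$ as $x\to\infty$. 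Hence $g$ is a strictly increasing bijection $[0,\infty)\to[g(0),0)$, and the right-hand side of \eqref{eqToSolve} equals $g(0)e^{\gamma^-(b^*)K}\in[g(0),0)$ for every $K\ge 0$, whence existence and uniqueness of $x^*_{b^*}$.

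For part (ii), implicit differentiation of $g(x^*_{b^*}(K))=g(0)e^{\gamma^-(b^*)K}$ produces $(x^*_{b^*})'(K)=\gamma^-(b^*)g(0)e^{\gamma^-(b^*)K}/g'(x^*_{b^*})>0$ (ratio of positive quantities), giving strict monotonicity. For the lower bound, factor $e^{\gamma^-(b^*)K}$ out of $g(K)-\mathrm{RHS}$ to obtain a function $Q(K)$ with $Q(0)=0$. Using Vieta's relations for $\Phi(1,\cdot)$, namely $\gamma^-(1)+\gamma^+(1)=-2\eta\mu/\sigma^2$ and $\gamma^-(1)\gamma^+(1)=-2\rho/(\lambda\sigma^2)$, yields $Q'(0)<0$; a sign inspection of $Q''$ shows $Q''<0$ throughout, so $Q(K)<0$ for $K>0$ and hence $x^*_{b^*}>K$. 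For the upper bound, the key identity $(\gamma^-(b^*)-\gamma^-(1))\hat x_{b^*}=\gamma^-(b^*)K$ collapses one of the exponents in $g(\hat x_{b^*})$; the resulting expression simplifies to $\gamma^+(1)(\gamma^-(b^*)-\gamma^-(1))\bigl[e^{(\gamma^-(b^*)-\gamma^+(1))\hat x_{b^*}}-e^{\gamma^-(b^*)K}\bigr]$, which is strictly positive after comparing the two exponents, giving $g(\hat x_{b^*})>\mathrm{RHS}$ and hence $x^*_{b^*}<\hat x_{b^*}$.

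For part (iii), the derivative-matching equation derived in the first step gives $C_1=(e^{\gamma^-(1)x^*_{b^*}}-e^{\gamma^-(b^*)(x^*_{b^*}-K)})/[\gamma^-(1)(e^{\gamma^-(1)x^*_{b^*}}-e^{\gamma^+(1)x^*_{b^*}})]$; using \eqref{eqToSolve} to express $e^{\gamma^-(b^*)(x^*_{b^*}-K)}$ in terms of $e^{\gamma^\pm(1)x^*_{b^*}}$ recasts it in the form $C_1=B(x^*_{b^*})/\gamma^-(1)$ with $B$ and $H$ as stated, and then $C_2=-B(x^*_{b^*})/\gamma^+(1)$ by the Neumann relation. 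Finally, $H(x^*_{b^*})<0$ is read off term by term (both terms are negative), and the identity $B-1=\gamma^-(1)(\gamma^-(b^*)-\gamma^+(1))e^{\gamma^+(1)x^*_{b^*}}/H(x^*_{b^*})$ expresses $B-1$ as a ratio of a positive numerator and a negative denominator, yielding $B(x^*_{b^*})\in(0,1)$. The main obstacle I anticipate is the bookkeeping in part (ii), in particular spotting that the product $(\gamma^-(b^*)-\gamma^-(1))\hat x_{b^*}$ telescopes to $\gamma^-(b^*)K$, which is precisely what makes the upper bound tractable; the remaining signs then fall into place by coordinated inspection.
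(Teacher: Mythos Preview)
Your argument is essentially the paper's own: derive \eqref{eqToSolve} from the three boundary conditions, show the left-hand side is strictly increasing from $g(0)<0$ to $0^-$, compare $g(K)$ and $g(\hat x_{b^*})$ with the right-hand side to get the bounds in (ii), and read off $C_1,C_2,B,H$ with the stated signs. The computations you sketch all check out, including the Vieta-based sign of $Q'(0)$ and the sign inspection of $Q''$ (which is a minor variant of the paper's direct proof that $S'(K)<0$; they factor and compare a ratio to $1$, you differentiate once more).

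There is, however, one genuine gap in your treatment of the uniqueness assertion in (iii). You write ``since claim (ii) suggests $x^*_{b^*}\ge K$, near $x^*_{b^*}$ the obstacle admits the explicit representation\dots'' and then work exclusively with the branch of $f_{b^*}$ valid for $y\ge K$. But claim (ii) is a statement about the root of \eqref{eqToSolve}, and \eqref{eqToSolve} itself was derived \emph{under} the hypothesis $x^*_{b^*}\ge K$; so the reasoning is circular as far as uniqueness of the solution to \eqref{FB} is concerned. Nothing in your argument rules out a second solution $(\hat w,x^*_{b^*})$ of \eqref{FB} with $x^*_{b^*}\in(0,K)$, where the smooth-fit conditions must be imposed using the other branch $f_{b^*}(y)=-(y-K)-\tfrac{1}{\gamma^-(b^*)}+\tfrac{1}{\gamma^-(1)}e^{\gamma^-(1)y}$. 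The paper closes this gap explicitly: writing out the matching system on $[0,K]$ and comparing the derivatives of the two sides, it shows the only solution there is the trivial one $x=0$, $C_1=C_2=0$. You need an analogous check to justify the word ``unique'' in (iii).
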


The next result provides the needed link between \eqref{VI} and \eqref{FB}.
\begin{theorem}\label{prop2}
Let $(\hat w, x^*_{b^*})$ be the solution to \eqref{FB} provided in Proposition \ref{prop1}. Then, 
$$w(x):= \begin{cases}
\hat w(x), \ \ \ \ \ \  \mbox{if} \ x\in[0,x^{*}_{b^{*}}),\\
f_{b^{*}}(x) \ \ \ \ \ \ \mbox{if} \ x\ge x_{b^{*}}^{*},
\end{cases}
$$ 
is a bounded solution to \eqref{VI} such that $w \in W^{2,\infty}_{loc}([0,\infty);\R)$. Therefore Theorem \ref{th:ver} applies: $w=F_{b^*}$ on $[0,\infty)$ and $\tau^*(b^*)=\inf\{t \ge 0 \ : \ X_t^x \ge x^*_{b^*}\}$ is optimal for \eqref{problem}.
\end{theorem}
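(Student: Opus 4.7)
The plan is to verify the three hypotheses of Theorem~\ref{th:ver}: that $w$ lies in $W^{2,\infty}_{\mathrm{loc}}([0,\infty);\R)$, is bounded, and satisfies the variational inequality \eqref{VI}. I address them in turn.

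\emph{Regularity, Neumann condition, boundedness.} The smooth-fit relations in \eqref{FB} give $w\in C^1([0,\infty))$, and $w$ is piecewise $C^\infty$ with bounded second derivative on each of the three pieces $[0,K]$, $[K,x^*_{b^*}]$, $[x^*_{b^*},\infty)$ --- the only $w''$-jumps occur at $K$, because of the piecewise definition of $G_{b^*}$ across $0$ in \eqref{GbxAnalyticPos}--\eqref{GbxAnalyticNeg}, and at $x^*_{b^*}$. Hence $w\in W^{2,\infty}_{\mathrm{loc}}$, and $w'(0)=\hat w'(0)=0$ from \eqref{FB}. Boundedness follows from the continuity of $\hat w$ on the compact $[0,x^*_{b^*}]$ together with Proposition~\ref{propfb}(c).

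\emph{VI on the stopping region $[x^*_{b^*},\infty)$.} Here $w=f_{b^*}$ so the obstacle condition is trivial and it suffices to prove $(\mathcal{L}-\rho)f_{b^*}\ge 0$. Since $G_1$ solves $(\mathcal{L}-\rho)\varphi=0$ (this is exactly \eqref{eqDiff} with $b=1$), one has $(\mathcal{L}-\rho)f_{b^*}(x)=(\mathcal{L}-\rho)G_{b^*}(x-K)$. Substituting $\rho G_{b^*}$ from \eqref{eqDiff} with $b=b^*$ and inserting the explicit form \eqref{GbxAnalyticPos}, a short algebraic reduction yields, for $x\ge K$,
\begin{equation*}
(\mathcal{L}-\rho)G_{b^*}(x-K)=-\frac{\Phi(1,\gamma^-(b^*))}{\gamma^-(b^*)}\,e^{\gamma^-(b^*)(x-K)}.
\end{equation*}
Because $b^*\in\mathcal B^*$ minimizes $\gamma^-$, $\gamma^-(b^*)\le\gamma^-(1)$; as $\gamma^-(1)$ is the smaller root of the convex parabola $\Phi(1,\cdot)$, one gets $\Phi(1,\gamma^-(b^*))\ge 0$. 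Combined with $-1/\gamma^-(b^*)>0$, this gives $(\mathcal{L}-\rho)f_{b^*}\ge 0$ on $[K,\infty)\supseteq[x^*_{b^*},\infty)$.

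\emph{Obstacle inequality on $[0,x^*_{b^*})$, the main technical point.} I need $\hat w\le f_{b^*}$ there. Set $d:=f_{b^*}-\hat w$: smooth fit gives $d(x^*_{b^*})=d'(x^*_{b^*})=0$, while \eqref{GbxAnalyticNeg}--\eqref{GbxAnalyticPos} give $f_{b^*}'(0)=-1+e^{\gamma^-(1)\cdot 0}=0$, so $d'(0)=0$ as well. A direct computation shows $(\mathcal{L}-\rho)d=(\mathcal{L}-\rho)f_{b^*}$ is strictly negative on $[0,K)$ and nonnegative on $[K,x^*_{b^*}]$. The cleanest route is probabilistic: applying It\^o's formula to $e^{-\rho (t\wedge\tau^*(x))}\hat w(X^x_{t\wedge\tau^*(x)})$, where $\tau^*(x):=\inf\{t\ge 0:X_t^x\ge x^*_{b^*}\}$, and using $(\mathcal{L}-\rho)\hat w=0$ on $(0,x^*_{b^*})$ together with the Neumann condition $\hat w'(0)=0$ (which kills the local-time contribution at the reflection boundary), one obtains
\begin{equation*}
\hat w(x)=f_{b^*}(x^*_{b^*})\,\E\bigl[e^{-\rho\tau^*(x)}\bigr].
\end{equation*}
Since $f_{b^*}$ is decreasing on $[0,x^*_{b^*}]$ (Proposition~\ref{propfb}(a) together with $x^*_{b^*}\le\hat x_{b^*}$ from Proposition~\ref{prop1}(ii)) and $\E[e^{-\rho\tau^*(x)}]\in(0,1]$, the inequality $\hat w(x)\le f_{b^*}(x)$ is immediate when $f_{b^*}(x^*_{b^*})\ge 0$. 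The residual case $f_{b^*}(x^*_{b^*})<0$ is the main obstacle: I expect to handle it by combining the three boundary conditions $d(x^*_{b^*})=d'(x^*_{b^*})=d'(0)=0$ with the sign structure of $(\mathcal{L}-\rho)d$ on the two sub-intervals separated by $K$ via a maximum-principle comparison on each. Once $w$ is verified to solve \eqref{VI}, Theorem~\ref{th:ver} directly gives $w=F_{b^*}$ and the optimality of $\tau^*(b^*)$ for \eqref{problem}.
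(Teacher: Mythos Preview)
Your treatment of regularity, boundedness, and of $(\mathcal{L}-\rho)f_{b^*}\ge 0$ on $[x^*_{b^*},\infty)$ is correct and essentially coincides with the paper's. The real work is the obstacle inequality $\hat w\le f_{b^*}$ on $[0,x^*_{b^*})$, and here your route is genuinely different from the paper's: the paper proceeds by explicit computation, first proving $w(0)\le f_{b^*}(0)$ via a rather delicate analysis of an auxiliary function $K\mapsto Q(x^*_{b^*}(K))$ (monotonicity and concavity in $K$), and then comparing $w'$ with $f'_{b^*}$ piecewise using the fact that the constant $B(x^*_{b^*})$ of Proposition~\ref{prop1} lies in $(0,1)$. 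Your probabilistic/maximum-principle approach is more conceptual and avoids all of that algebra.

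However, as written your argument has a gap. The probabilistic representation $\hat w(x)=f_{b^*}(x^*_{b^*})\,\E[e^{-\rho\tau^*(x)}]$ disposes of the case $f_{b^*}(x^*_{b^*})\ge 0$, but this is \emph{not} the generic case: by Proposition~\ref{propfb} one has $f_{b^*}<0$ everywhere in case~(ii), and even in case~(i) the value $f_{b^*}(x^*_{b^*})$ is typically negative (e.g.\ for all small $K$). For the remaining case you only sketch a maximum-principle argument. That sketch \emph{can} be completed, but you have not done so, and the order of the two sub-intervals matters. Here is what actually closes it. Work first on $[K,x^*_{b^*}]$: from $d(x^*_{b^*})=d'(x^*_{b^*})=0$ and $(\mathcal{L}-\rho)d(x^*_{b^*})>0$ you get $d''(x^*_{b^*})>0$, hence $d>0$ just left of $x^*_{b^*}$; if $d$ had another zero $x_2\in(K,x^*_{b^*})$ with $d>0$ on $(x_2,x^*_{b^*})$, the interior positive maximum $x_m\in(x_2,x^*_{b^*})$ would satisfy $(\mathcal{L}-\rho)d(x_m)\le -\rho d(x_m)<0$, contradicting $(\mathcal{L}-\rho)d>0$ on $(K,x^*_{b^*})$. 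Thus $d\ge 0$ on $[K,x^*_{b^*}]$, in particular $d(K)\ge 0$. Then on $[0,K]$: a negative minimum at any $x_0\in[0,K)$ (including $x_0=0$, where $d'(0)=0$ forces $d''(0)\ge 0$) would give $(\mathcal{L}-\rho)d(x_0)\ge -\rho d(x_0)>0$, contradicting $(\mathcal{L}-\rho)d<0$ on $[0,K)$. This yields $d\ge 0$ on all of $[0,x^*_{b^*}]$ without any case split on the sign of $f_{b^*}(x^*_{b^*})$; the probabilistic detour is then unnecessary. Once completed in this way, your argument is shorter and cleaner than the paper's explicit computation.
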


\begin{proof} 
See Appendix.
\end{proof}

Theorem \ref{prop2} provides the optimal reinsurance rule. In particular, once the optimal level $b^* \in \mathcal{B}^*$ has been determined by minimizing the function $\gamma^-$ over $[0,1]$, it is optimal to reinsure at the first time at which the company's surplus process exceeds the critical level $x^*_{b^*}$. Such a trigger value is completely determined as the unique solution to \eqref{eqToSolve}. Since the level $b^*$ is independent of $K$, due to Proposition \ref{prop1}-(ii), we see that, when the fixed cost $K\downarrow 0$, also $x^*_{b^*} \downarrow 0$, so that it is optimal to immediately undertake reinsurance. Given that our $b^*$ coincides, up to a parametrization, with that of \cite{EisenbergSchmidli2009} (cf.\ Remark \ref{rem:Schmidli}), we can thus observe that our optimal policy is consistent with that of \cite{EisenbergSchmidli2009} in the limit of vanishing fixed cost.

\section{Examples and Illustrations}
\label{sec:applications}

In this section we apply the general findings to the two relevant cases of proportional and excess-of-loss reinsurance.

\subsection{Proportional reinsurance}
\label{proportional}

We consider the case of proportional reinsurance. The retention level of the insurer is given by the function $r(z, b) = bz$, 
for each $b \in [0,1]$ (cf.\ \eqref{propReins}); consequently, $M_1(b)= \mu b$ and $M_2(b) = \sigma^2 b^2$. 
The surplus process $X_t^x$ (cf.\ \eqref{surplus}) then becomes 
\begin{eqnarray}
\label{X-ex-1}
X^{x}_{t}= X^{x}_{\tau^{-}}-K+ \lambda \mu(\theta b_{\tau} - (\theta - \eta))(t-\tau)+ \sqrt{\lambda} \sigma b_\tau(W_{t}-W_{\tau}) + I_{t}, \ \ \ t\geq \tau.
\end{eqnarray}

In this case, equation \eqref{pol} reads:
\begin{eqnarray}\label{polProp}
\Phi(b,\gamma):=\frac{1}{2}\lambda \sigma^2 b^2 \gamma^2+
\lambda \mu \left(\theta b- (\theta - \eta)\right)\gamma-\rho, \quad \gamma \in  \mathbb R.
\end{eqnarray}
The next result characterizes $\mathcal B^*$.
\begin{proposition}\label{propInternalMin}
It holds
\begin{itemize}
\item[(i)] $\mathcal B^*=\{b^*\}$, with $b^* \in (0,1]$;
\item[(ii)] We have 
\begin{eqnarray}\label{cond:proportional}
b^*<1 \quad\iff\quad \eta < \theta < \eta + \displaystyle{\sqrt{\eta^2+\frac{2\rho \sigma^2}{\lambda \mu^2}}}.
\end{eqnarray}
In this case, $b^*$ is the unique solution to the equation 
\begin{eqnarray}\label{equationPhi}
\phi(b)=0
\end{eqnarray}
where
\begin{eqnarray}\label{phi}
\phi(b) = \sigma^2 b\gamma^-(b)+\mu \theta.
\end{eqnarray}
\end{itemize}
\end{proposition}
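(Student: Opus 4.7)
The strategy is to study the monotonicity of $\gamma^-$ on $[0,1]$ via implicit differentiation of the identity $\Phi(b,\gamma^-(b))\equiv 0$, to show that its critical points coincide exactly with the zeros of $\phi$, and then to combine this with the endpoint values of $\phi$ and a transversality-based uniqueness argument.

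Differentiating $\Phi(b,\gamma^-(b))=0$ in $b$, using $\partial_b\Phi(b,\gamma)=\lambda\gamma(\sigma^2 b\gamma+\mu\theta)$, yields
\[
\frac{d\gamma^-}{db}(b) \;=\; -\frac{\lambda\,\gamma^-(b)\,\phi(b)}{\partial_\gamma\Phi(b,\gamma^-(b))}\,.
\]
For $b>0$, $\gamma^-$ is the negative root of the upward-opening quadratic $\Phi(b,\cdot)$, so $\gamma^-(b)<0$ and $\partial_\gamma\Phi(b,\gamma^-(b))<0$. Hence the sign of $d\gamma^-/db$ is opposite to that of $\phi(b)$, and the critical points of $\gamma^-$ in $(0,1)$ are exactly the zeros of $\phi$. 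Direct evaluation gives $\phi(0)=\mu\theta>0$ (with $\gamma^-$ extended continuously at $b=0$, where $\Phi(0,\cdot)$ is linear), and plugging \eqref{gammaMeno} at $b=1$ produces
\[
\phi(1)\;=\;\mu(\theta-\eta)-\sqrt{\mu^2\eta^2+2\rho\sigma^2/\lambda}\,.
\]
Since $\theta>\eta$, squaring shows $\phi(1)<0$ iff $(\theta-\eta)^2<\eta^2+2\rho\sigma^2/(\lambda\mu^2)$, i.e., precisely condition \eqref{cond:proportional}.

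Uniqueness of a zero of $\phi$ in $[0,1]$ follows from a transversality argument. At any zero $b_0\in(0,1]$, the formula above forces $(d\gamma^-/db)(b_0)=0$, so
\[
\phi'(b_0)\;=\;\sigma^2\gamma^-(b_0)+\sigma^2 b_0\,(d\gamma^-/db)(b_0)\;=\;\sigma^2\gamma^-(b_0)\;<\;0\,.
\]
Two zeros $b_1<b_2$ would be incompatible: $\phi<0$ in a right-neighbourhood of $b_1$, while $\phi(b_2)=0$ would force $\phi'(b_2)\geq 0$, contradicting $\phi'(b_2)<0$.

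The proof concludes by a case split. If $\phi(1)\geq 0$, uniqueness excludes any zero of $\phi$ in $(0,1)$ (such a zero would push $\phi$ below $0$ with no possibility of returning), so $\phi>0$ on $[0,1)$, $\gamma^-$ is strictly decreasing on $[0,1]$, and $\mathcal B^*=\{1\}$. If $\phi(1)<0$, the intermediate value theorem together with uniqueness produces a single $b^*\in(0,1)$ solving $\phi(b^*)=0$, with $\phi>0$ on $[0,b^*)$ and $\phi<0$ on $(b^*,1]$; hence $\gamma^-$ strictly decreases on $[0,b^*]$ and strictly increases on $[b^*,1]$, giving $\mathcal B^*=\{b^*\}$. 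In both cases $b^*>0$ because $\phi(0)>0$ makes $\gamma^-$ strictly decreasing near $0$. Matching the two regimes against the computation of $\phi(1)$ yields the equivalence in (ii). The only mildly delicate ingredient is the transversality argument ensuring uniqueness of the zero of $\phi$; everything else is direct algebra and sign bookkeeping.
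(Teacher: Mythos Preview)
Your proof is correct and follows essentially the same approach as the paper: implicit differentiation of $\Phi(b,\gamma^-(b))=0$ to write $(\gamma^-)'(b)$ as a positive multiple of $-\phi(b)$, together with the endpoint values $\phi(0)=\mu\theta>0$ and $\phi(1)=\mu(\theta-\eta)-\sqrt{\mu^2\eta^2+2\rho\sigma^2/\lambda}$. The one point of divergence is the uniqueness of the zero of $\phi$: the paper computes $\phi'(b)$ explicitly for \emph{all} $b\in[0,1]$ and obtains $\phi'(b)=-\sigma^2\mu(\theta-\eta)\gamma^-(b)/D(b)<0$ (with $D(b):=\sigma^2 b^2\gamma^-(b)+\mu(\theta b-(\theta-\eta))<0$), so $\phi$ is globally strictly decreasing; you instead evaluate $\phi'$ only at zeros of $\phi$, where $(\gamma^-)'=0$ makes $\phi'(b_0)=\sigma^2\gamma^-(b_0)<0$ immediate, and then argue by transversality. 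Both arguments are valid; the paper's yields the slightly stronger conclusion that $\phi$ is monotone, while yours avoids the algebra needed to simplify $\phi'(b)$ in general.
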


\begin{proof}
See Appendix.
\end{proof}

We next illustrate numerically the sensitivity of the optimal level $b^*$ and of the optimal reinsurance boundary $x^*_{b^*}$ with respect some relevant model's parameters. We choose benchmark values of the parameters as it follows: $\theta=0.5$, $\eta=0.3$, $\lambda=0.05$, $\rho=0.04$, $\mu=10$, $\sigma^2=200$, $K=10$.
With such a choice of values, condition \eqref{cond:proportional} is satisfied, and from \eqref{equationPhi} (see Proposition \ref{propInternalMin}) and
Proposition \ref{prop1} we compute $b^*=0.0580$ and $x^*_{b^*}=12.2341$.

Figure \ref{fig1} shows how $b^*$ varies with the parameters $\rho$, $\mu$ and $\sigma^2$.
We observe that the level $b^*$ is decreasing with respect to the time-preference factor $\rho$ of the insurer: The more the insurer is impatient, the less is the (discounted) cost paid at time $\tau$, the more is convenient to reinsure in order to minimize the amplitude of the Brownian fluctuations and therefore the probability of capital injections being necessary. Furthermore, if $\mu$ increases, the drift of the surplus process decreases (see \eqref{X-ex-1}), thus increasing the probability of the need of additional capital injections. Hence, the company reinsures less, i.e.\ $b^*$ increases, in order to mitigate such an effect.
Finally, we see that if $\sigma^2$ increases, hence if the size of the Brownian risk increases, the insurance company passes more risk on. 

\begin{figure}[htb!]
\begin{eqnarray*}
\begin{array}{ccccccc}
\includegraphics[width=0.2\textwidth]{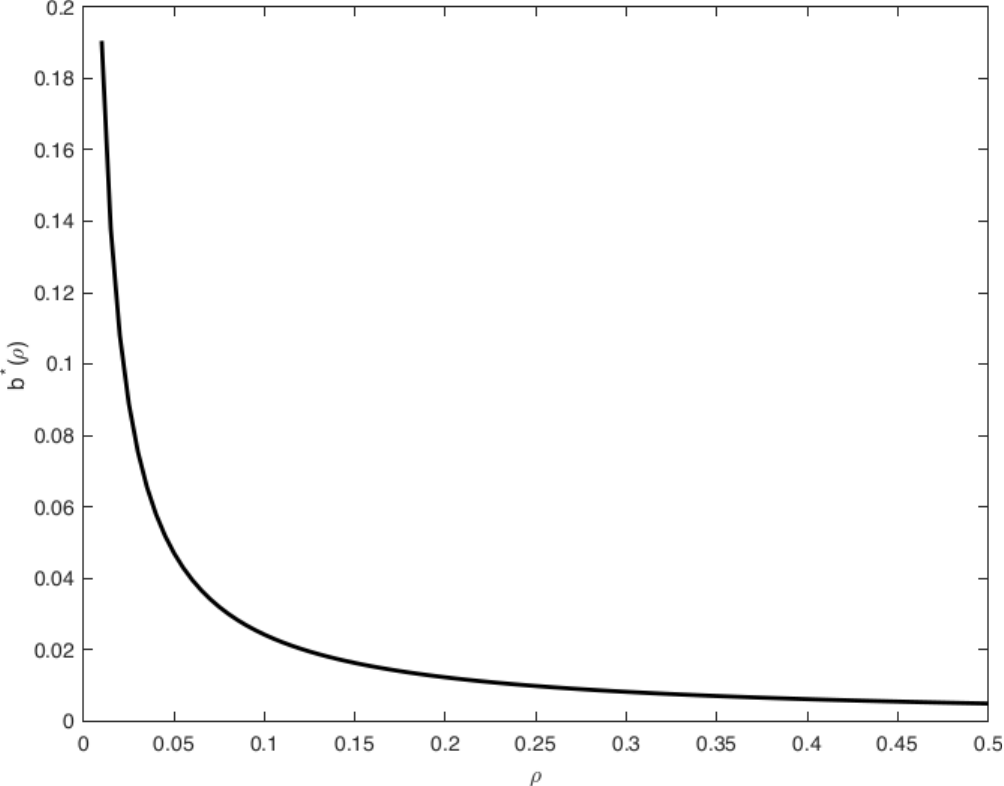} &&
\includegraphics[width=0.2\textwidth]{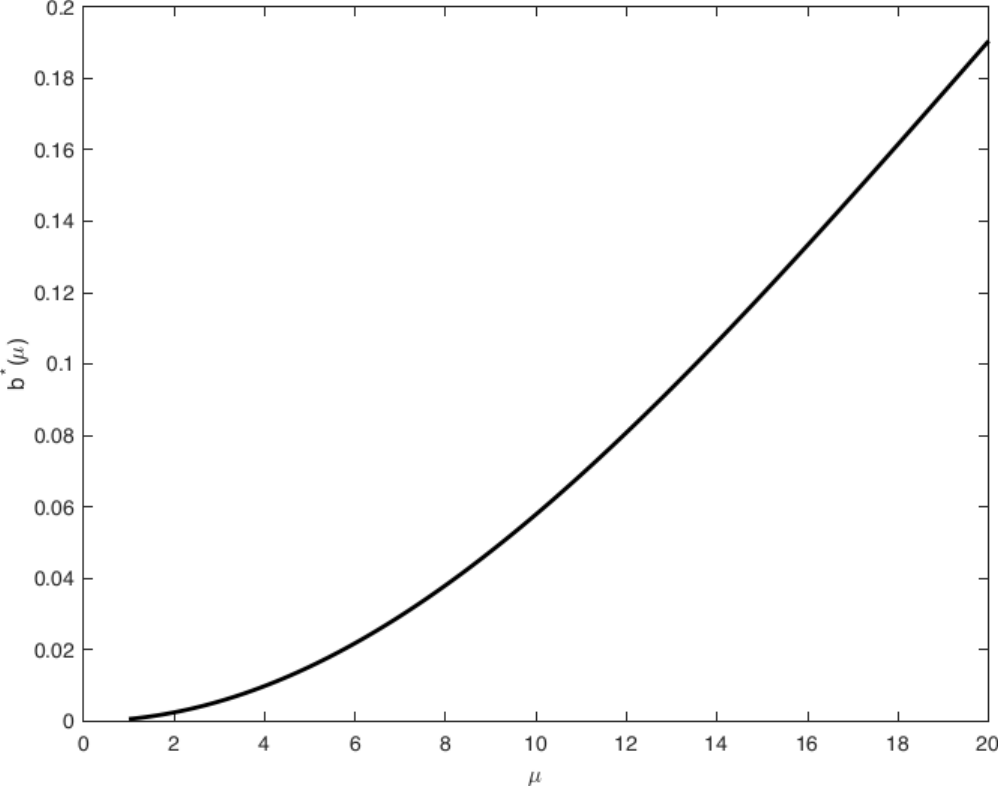} &&
\includegraphics[width=0.2\textwidth]{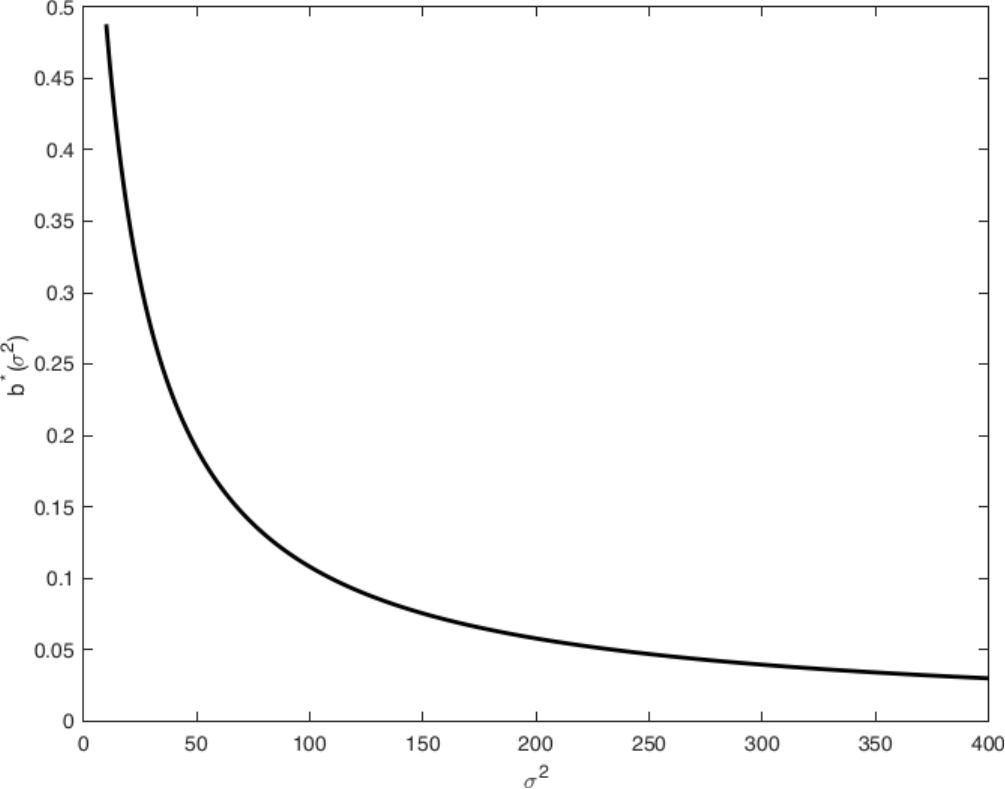} &&
\phantom{\includegraphics[width=0.2\textwidth]{proportional_bStarSigma}}\\
b^*(\rho)  && b^*(\mu) && b^*(\sigma^2)\\ 
\end{array}
\end{eqnarray*}
\caption{Dependency of $b^*$ with respect to $\rho$, $\mu$ and $\sigma^2$.}
\label{fig1}
\end{figure}

Figure \ref{fig2} shows how $x^*_{b^*}$ depends on the parameters $\rho$, $\mu$, $\sigma^2$ and $K$. 
Our numerical example reveals that, if $\rho$ increases, then the insurance company becomes more impatient and anticipates reinsurance. Increasing monotonicity of $x^*_{b^*}$ is instead observed with respect to the transaction cost $K$ and the parameter $\mu$. The larger $K$ is, the more expensive is the reinsurance contract, and the later is its starting time. Also, if $\mu$ increases, the trend of the surplus process decreases (see again \eqref{X-ex-1}), thus inducing the company to postpone reinsurance and hence to keep $b=1$ for a longer time period in order to reduce the negative growth of $X$ (in absolute value) and consequently the possibility of capital injections. Finally, if $\sigma^2$ increases, the amplitude of the Brownian fluctuations becomes more relevant, this calling for an earlier reinsurance aiming at mitigating the increase of the probability of additional capital injections. 

\begin{figure}[htb!]
\begin{eqnarray*}
\begin{array}{ccccccc}
\includegraphics[width=0.2\textwidth]{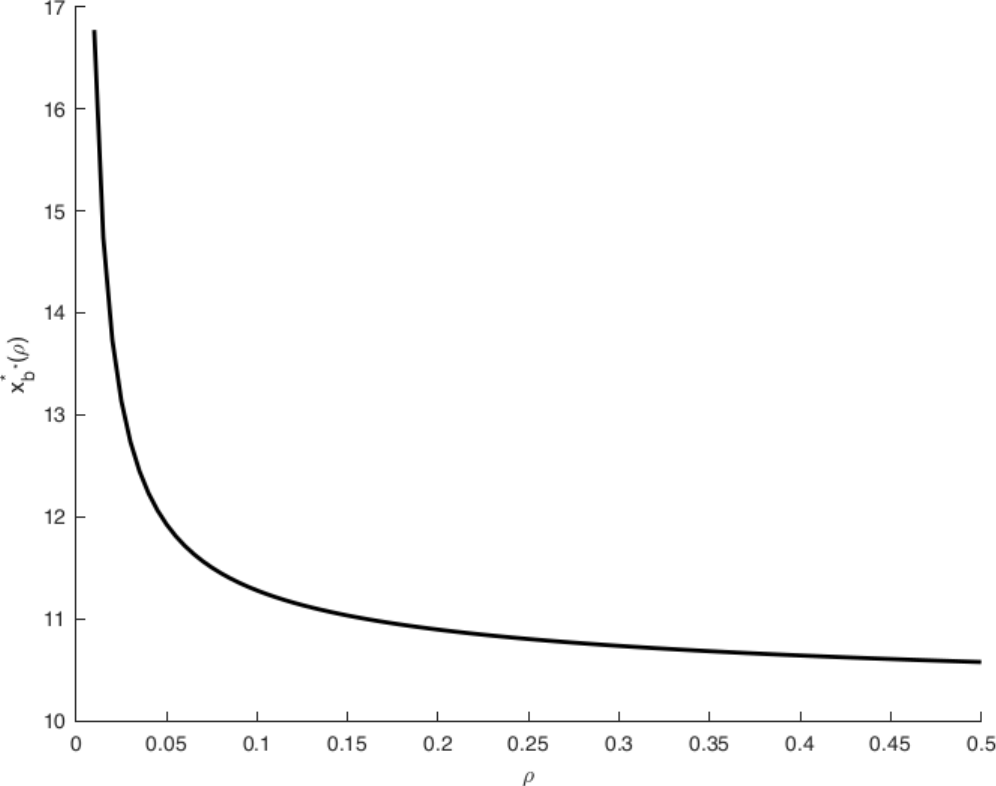} &&
\includegraphics[width=0.2\textwidth]{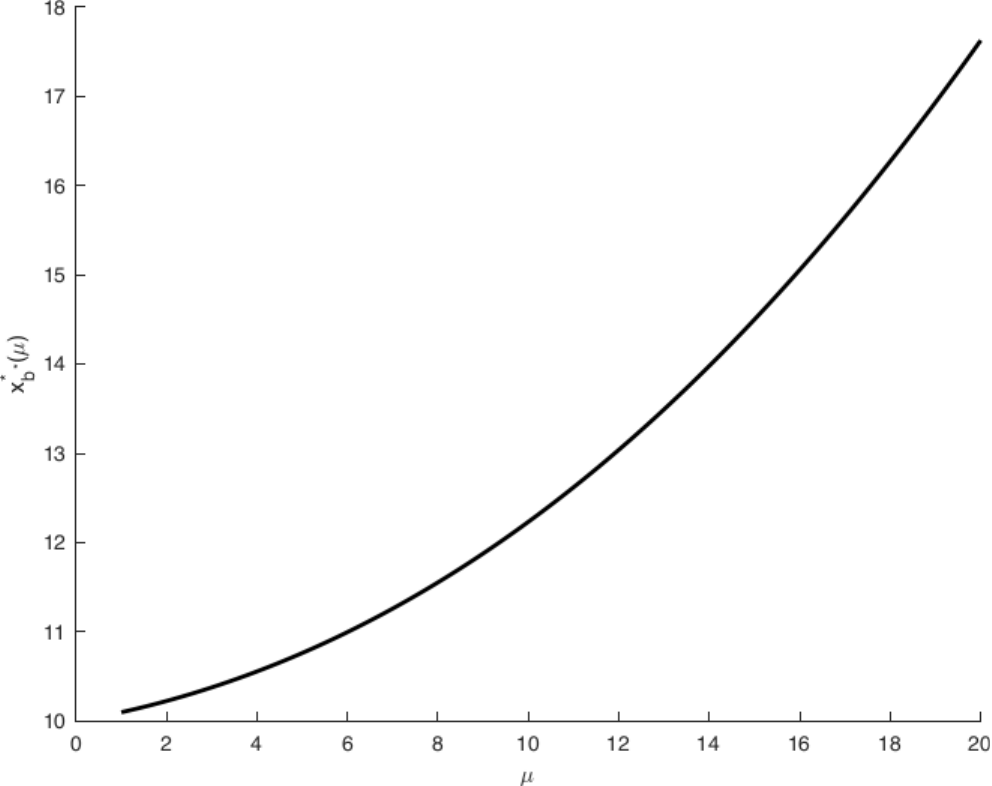} &&
\includegraphics[width=0.2\textwidth]{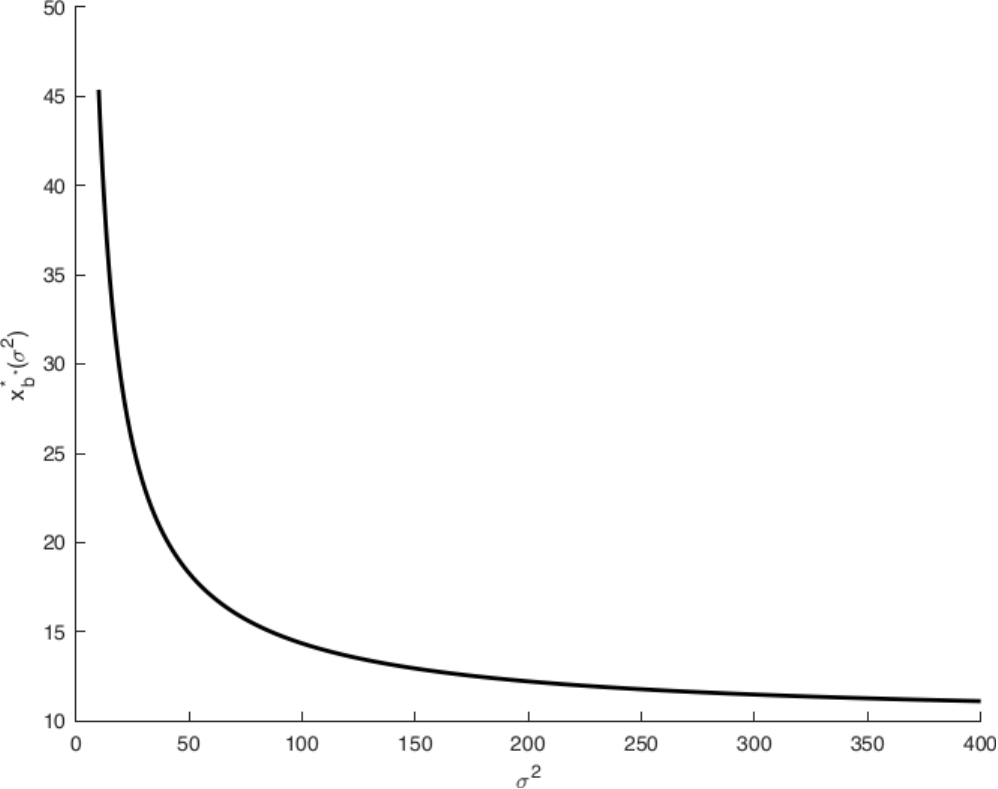} &&
\includegraphics[width=0.2\textwidth]{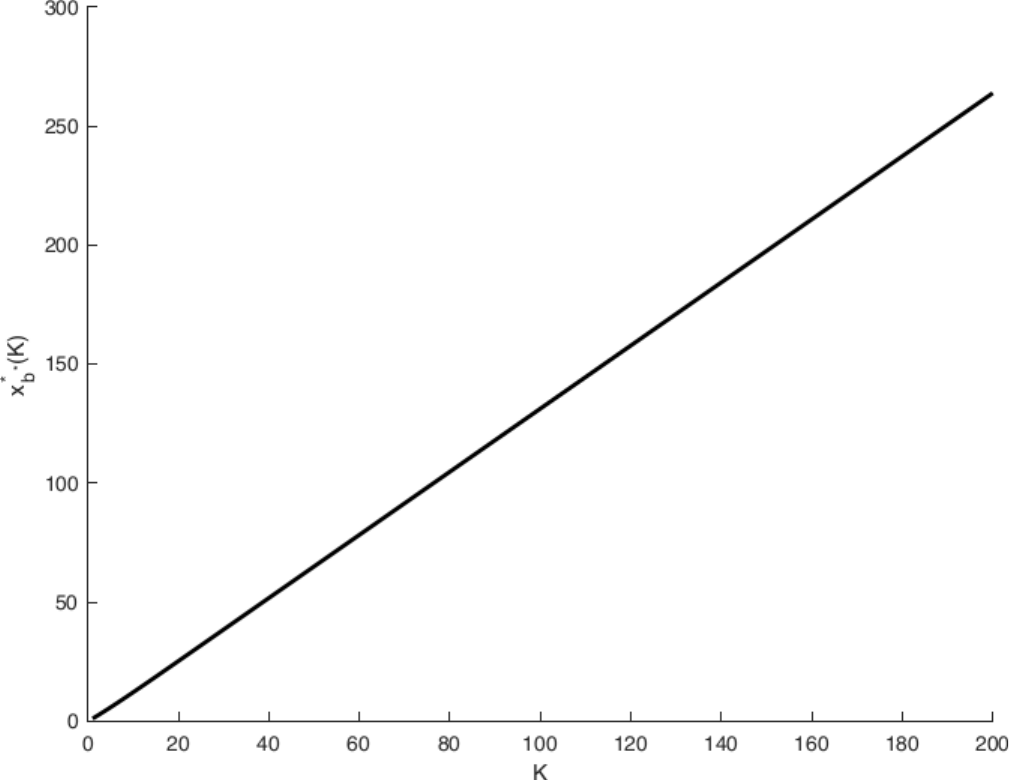} \\
x^*_{b^*}(\rho) && x^*_{b^*}(\mu) && x^*_{b^*}(\sigma^2) && x^*_{b^*}(K)
\end{array}
\end{eqnarray*}
\caption{Dependency of $x^*_{b^*}$ with respect to $\rho$, $\mu$, $\sigma^2$ and $K$.}
\label{fig2}
\end{figure}


\subsection{Excess-of-loss reinsurance}
\label{excessOfLoss}

We here consider the case of excess-of-loss reinsurance. The retention level of the insurer is given by the function 
$r(z, b) = z \wedge \frac{b}{1-b}$, for each $b \in [0,1]$ (cf.\ \eqref{excessOfLossReins}). We assume that the distribution of the claim sizes has a density $p$ with respect to the Lebesgue measure; that is, $\nu_Z(dz)=p(z) dz$. 
The next result characterizes $\mathcal B^*$.
\begin{proposition}\label{propEOL}
It holds $b^* \in \mathcal B^*$ if and only if $b^* \in (0,1)$ is a solution to the equation 
\begin{eqnarray}\label{equationPsi}
\psi(b)=0, 
\end{eqnarray}
where
\begin{eqnarray}\label{psi}
\psi(b) := \frac{b}{1-b}\gamma^-(b)+ \theta, \quad b\in[0,1].
\end{eqnarray}
\end{proposition}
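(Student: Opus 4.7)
The plan is to mimic the strategy used for Proposition \ref{propInternalMin}: obtain the first-order condition satisfied at a minimizer of $\gamma^-$ by implicit differentiation of $\Phi(b,\gamma^-(b))=0$ (with $\Phi$ as in \eqref{pol}), then exploit the particular structure of the excess-of-loss retention to show that this first-order condition collapses to $\psi(b)=0$, and finally verify existence and uniqueness of a zero of $\psi$ in $(0,1)$ by a boundary/monotonicity analysis.

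The first step is routine: since $\gamma^-(b)$ is the smaller root of the upward-opening quadratic $\Phi(b,\cdot)$, one has $\Phi_\gamma(b,\gamma^-(b))<0$, so $(\gamma^-)'(b)$ and $\Phi_b(b,\gamma^-(b))$ have the same sign. The key computation is in the excess-of-loss setting. Writing $u(b):=b/(1-b)$ so that $r(z,b)=z\wedge u(b)$, differentiating under the integral sign in $M_1(b)=\int(z\wedge u(b))\,p(z)dz$ and $M_2(b)=\int (z\wedge u(b))^2\,p(z)dz$ (the boundary terms cancel) yields
$$M_1'(b)=u'(b)\int_{u(b)}^{\infty}p(z)\,dz,\qquad M_2'(b)=2u(b)M_1'(b).$$
The identity $M_2'(b)=2u(b)M_1'(b)$ is the structural feature of excess-of-loss that makes everything work. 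Substituting into $\Phi_b(b,\gamma)=\tfrac12\lambda M_2'(b)\gamma^2+\lambda\theta M_1'(b)\gamma$, one obtains
$$\Phi_b(b,\gamma^-(b))=\lambda M_1'(b)\,\gamma^-(b)\bigl[u(b)\gamma^-(b)+\theta\bigr]=\lambda M_1'(b)\,\gamma^-(b)\,\psi(b).$$
Assuming that the support of the claim distribution is unbounded (so that $\int_{u(b)}^{\infty}p(z)dz>0$ for $b\in(0,1)$, hence $M_1'(b)>0$), and recalling $\gamma^-(b)<0$, one concludes that $(\gamma^-)'(b)$ has the opposite sign of $\psi(b)$: in particular, interior critical points of $\gamma^-$ coincide with interior zeros of $\psi$.

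To finish, I would analyze $\psi$ on $[0,1]$. Since $u(0)=0$, $\psi(0)=\theta>0$; as $b\to 1^-$, $u(b)\to+\infty$ while $\gamma^-(b)\to\gamma^-(1)<0$ (continuity of $\gamma^-$ at $b=1$ follows from dominated convergence applied to $M_1,M_2$), so $\psi(b)\to-\infty$. The intermediate value theorem provides a zero $b^*\in(0,1)$. For uniqueness, at any such zero one has $(\gamma^-)'(b^*)=0$, hence
$$\psi'(b^*)=\frac{\gamma^-(b^*)}{(1-b^*)^2}+u(b^*)(\gamma^-)'(b^*)=\frac{\gamma^-(b^*)}{(1-b^*)^2}<0,$$
so $\psi$ is strictly decreasing at each of its zeros and therefore vanishes at exactly one point. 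By the sign analysis, $\gamma^-$ is strictly decreasing on $(0,b^*)$ and strictly increasing on $(b^*,1)$, which simultaneously rules out the boundary points $b=0,1$ as minimizers and proves both directions of the stated equivalence, with $\mathcal B^*=\{b^*\}$.

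The main obstacle I expect is not the algebra but the moment derivative step: one must carefully differentiate an integral whose integrand has a kink at $z=u(b)$, verify that the boundary contributions from differentiating the limits cancel the jump in the integrand's derivative, and justify $M_1'(b)>0$ (which requires an implicit support assumption on $\nu_Z$ beyond the standing hypotheses). The limiting behavior of $\gamma^-$ at $b=1^-$ requires a short continuity argument, but once both are in place the rest of the proof is a clean sign-chase.
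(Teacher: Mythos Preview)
Your approach is essentially the paper's: implicit differentiation of $\Phi(b,\gamma^-(b))=0$, computation of $M_1'$ and $M_2'$, the excess-of-loss identity $M_2'(b)=2u(b)M_1'(b)$ to factor the first-order condition through $\psi$, and the boundary analysis $\psi(0)=\theta>0$, $\psi(1^-)=-\infty$. You add a uniqueness step the paper omits---at any zero $b^*$ one has $(\gamma^-)'(b^*)=0$ and hence $\psi'(b^*)=\gamma^-(b^*)/(1-b^*)^2<0$, so $\psi$ vanishes exactly once---which in fact completes the ``if'' direction of the stated equivalence (the paper's own proof only shows that interior minimizers solve $\psi=0$ and that a zero exists). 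Your caveat about needing unbounded support of $\nu_Z$ for $M_1'>0$ on $(0,1)$ is apt; the paper makes the same assumption implicitly when asserting $\chi>0$ on $[0,1)$.
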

\begin{proof}
See Appendix.
\end{proof}

\subsubsection{The case $Z \sim \textrm{Exp}(1/\mu)$}
\label{section:EoLExp}

We assume here that the claim sizes are exponentially distributed with $\nu_Z(dz)=\frac{1}{\mu}e^{-z/\mu} dz$. Consequently, by some easy computations, we find
\begin{eqnarray*}
M_1(b)&=&\mu\left(1-e^{-\frac{b}{(1-b)\mu}}\right)\\
M_2(b) &=& 2\mu\left(\mu-\left(\frac{b}{1-b}+\mu\right) e^{-\frac{b}{(1-b)\mu}} \right).
\end{eqnarray*}
The surplus process $X_t^x$ (cf.\ \eqref{surplus}) thus becomes 
\begin{eqnarray*}
X^{x}_{t}&=& X^{x}_{\tau^{-}}-K+ \lambda \mu \left(\eta -\theta e^{-\frac{b_\tau}{(1-b_\tau)\mu}} \right)(t-\tau)+\\
&&+ \sqrt{2\lambda \mu\left(\mu-\left(\frac{b_\tau}{1-b_\tau}+\mu\right) e^{-\frac{b_\tau}{(1-b_\tau)\mu}}\right)}(W_{t}-W_{\tau}) 
+ I_{t}, \ \ \ t\geq \tau,
\end{eqnarray*}
while equation \eqref{pol} now reads
\begin{eqnarray*}
\Phi(b,\gamma):=\lambda\mu\left(\mu-\left(\frac{b}{1-b}+\mu\right) e^{-\frac{b}{(1-b)\mu}}\right) \gamma^2+
\lambda \mu \left(\eta -\theta e^{-\frac{b}{(1-b)\mu}}\right)\gamma-\rho, \quad \gamma \in  \mathbb R.
\end{eqnarray*}

We illustrate numerically the sensitivity of the optimal level $b^*$ and of the optimal reinsurance boundary $x^*_{b^*}$ with respect some relevant model's parameters. We choose benchmark values of the parameters as it follows: $\theta=0.5$, $\eta=0.3$, $\lambda=0.05$, $\rho=0.04$, $\mu=10$, $K=10$.
With such a choice, by Proposition \ref{prop1} and from \eqref{equationPsi} (see Proposition \ref{propEOL}), we obtain $b^*=0.4627$ and $x^*_{b^*}=11.4785$.

Figure \ref{fig3} shows how $b^*$ depends on the parameters $\rho$ and $\mu$. With respect to $\rho$, we observe a behavior of $b^*$ which is line with that we had in the case of proportional reinsurance, which can be then explained in the same way. Noticing that under the Exponential distribution of the claim's size the parameter $\mu$ measures both the amplitude of the Brownian fluctuations and the trend of the average profits of the company, we observe that an increase in $\mu$ leads the company to reinsure less, i.e.\ to an increase of $b^*$. Hence, in the interplay of the two roles played by $\mu$, the drift effect appears to be dominant and the behavior of $\mu \mapsto b^*(\mu)$ can be thus explained as in the case of the proportional reinsurance (cf.\ Figure \ref{fig1}).

\begin{figure}[htb!]
\begin{eqnarray*}
\begin{array}{ccccc}
\includegraphics[width=0.27\textwidth]{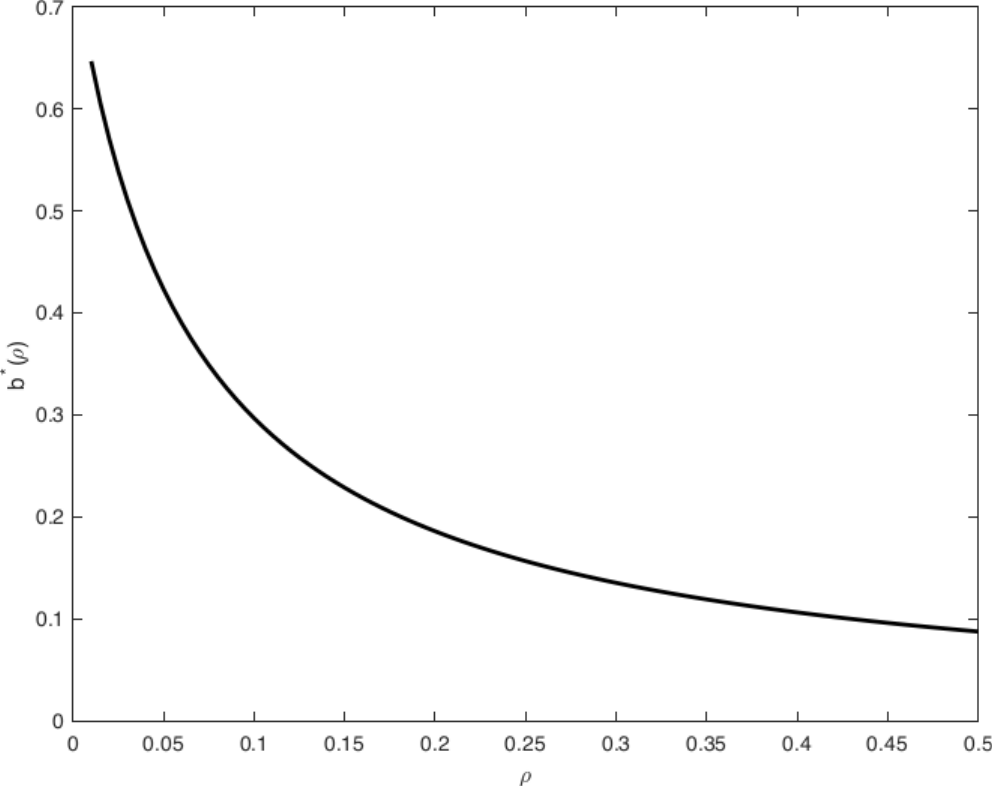} &&
\includegraphics[width=0.27\textwidth]{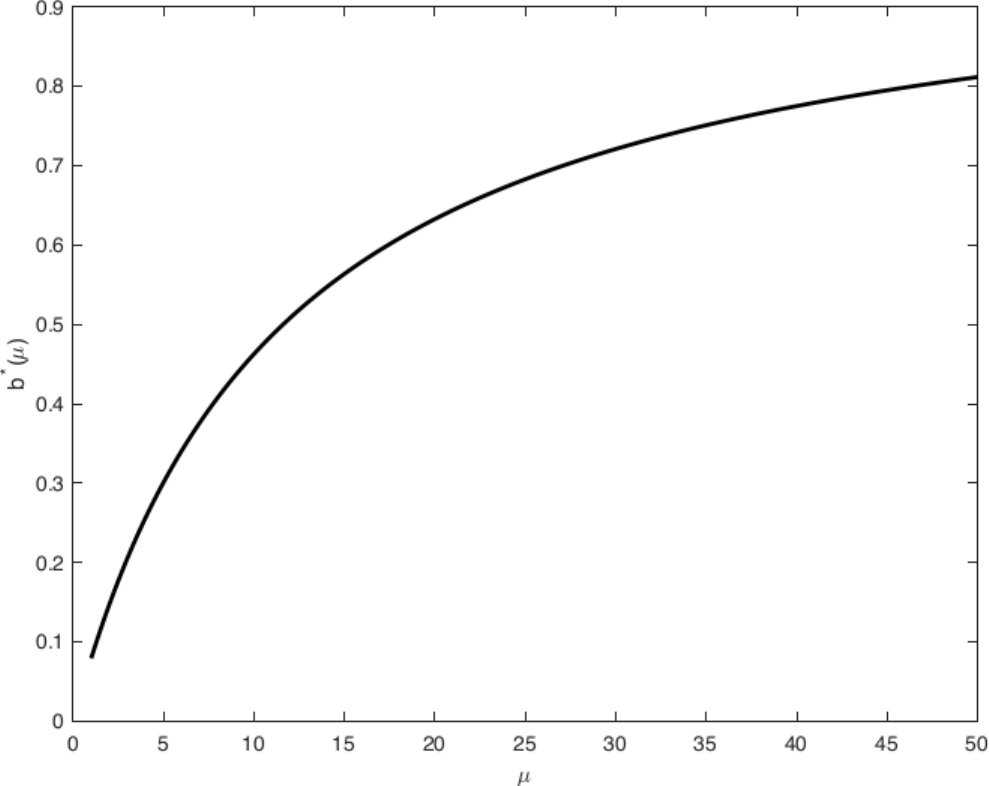} &&
\phantom{\includegraphics[width=0.2\textwidth]{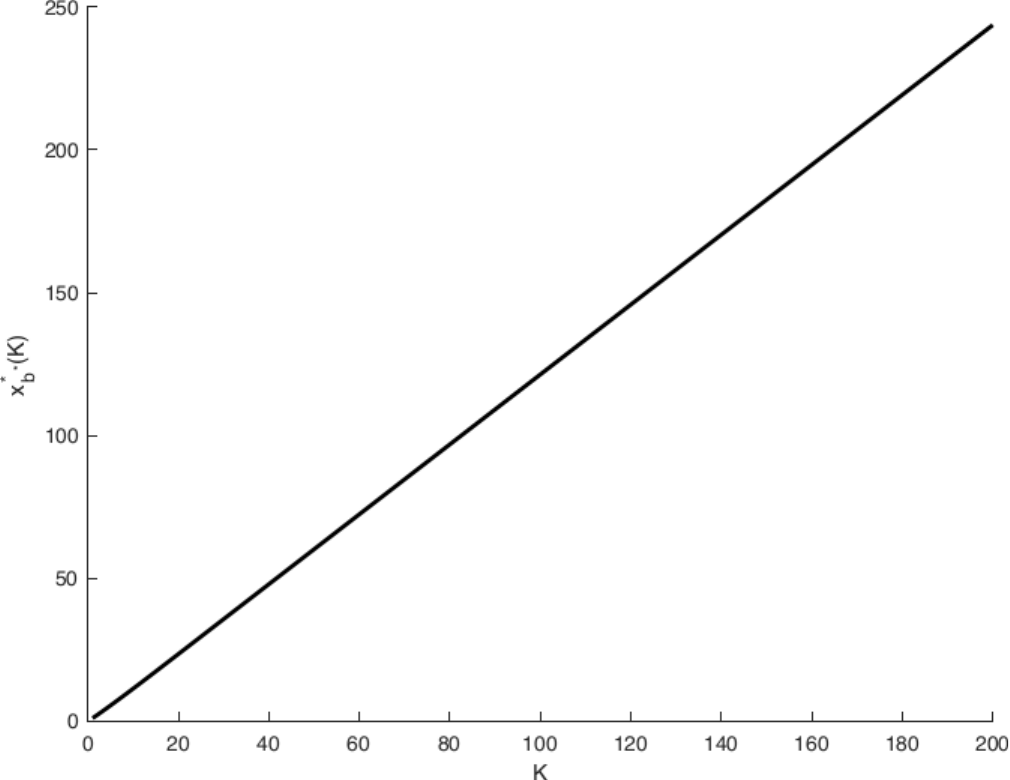}} \\
b^*(\rho) &&  b^*(\mu) && \\ 
\end{array}
\end{eqnarray*}
\caption{Dependency of $b^*$ with respect to $\rho$ and $\mu$.}\label{fig3}
\end{figure}

Figure \ref{fig4} plots $x^*_{b^*}$ as a function of the parameters $\rho$, $\mu$ and $K$. 
While the behavior of $x^*_{b^*}$ with respect to $\rho$ and $K$ can be explained by the same reasoning that we followed in the case of proportional reinsurance, a different pattern is observed for the dependency of $x^*_{b^*}$ with respect to $\mu$. Here, we see that $x^*_{b^*}$ is decreasing with respect to $\mu$. Recalling again that $\mu$ measures both the amplitude of the Brownian fluctuations and the trend of the average profits of the company, we find that in the interplay of the two roles played by $\mu$, the volatility effect appears to be dominant and the behavior of $\mu \mapsto b^*(\mu)$ can be thus explained as that of $\sigma^2 \mapsto b^*(\sigma^2)$ in the case of the proportional reinsurance (cf.\ Figure \ref{fig2}).

\begin{figure}[htb!]
\begin{eqnarray*}
\begin{array}{ccccccc}
\includegraphics[width=0.27\textwidth]{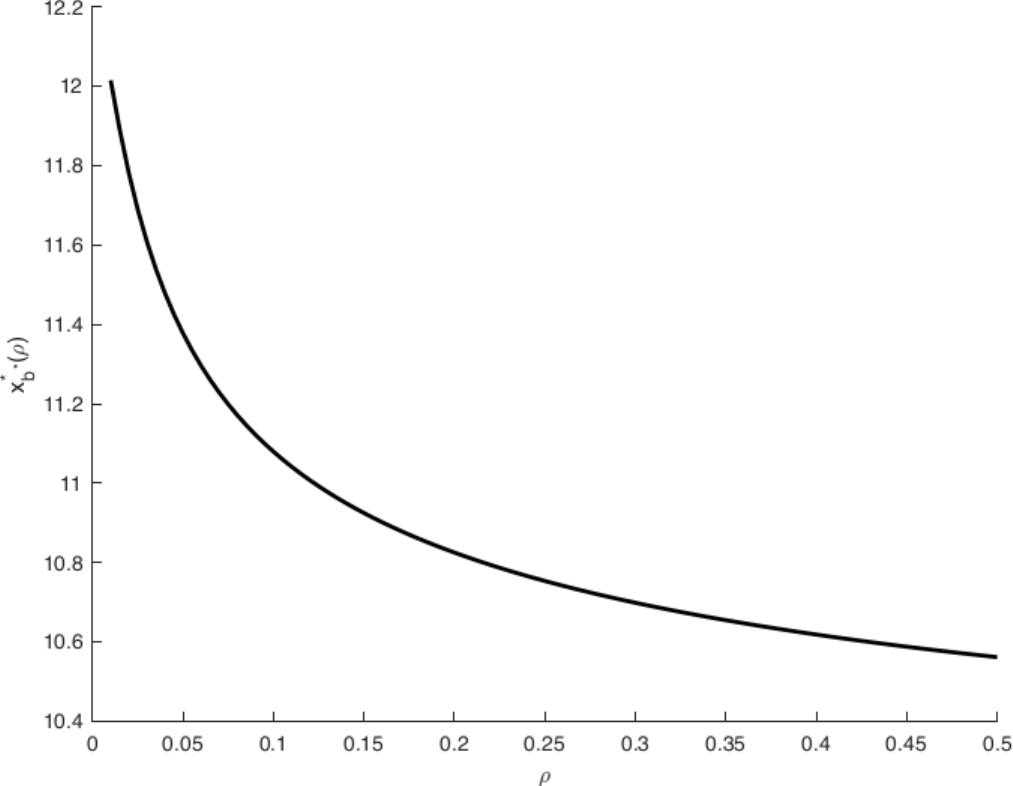} &&
\includegraphics[width=0.27\textwidth]{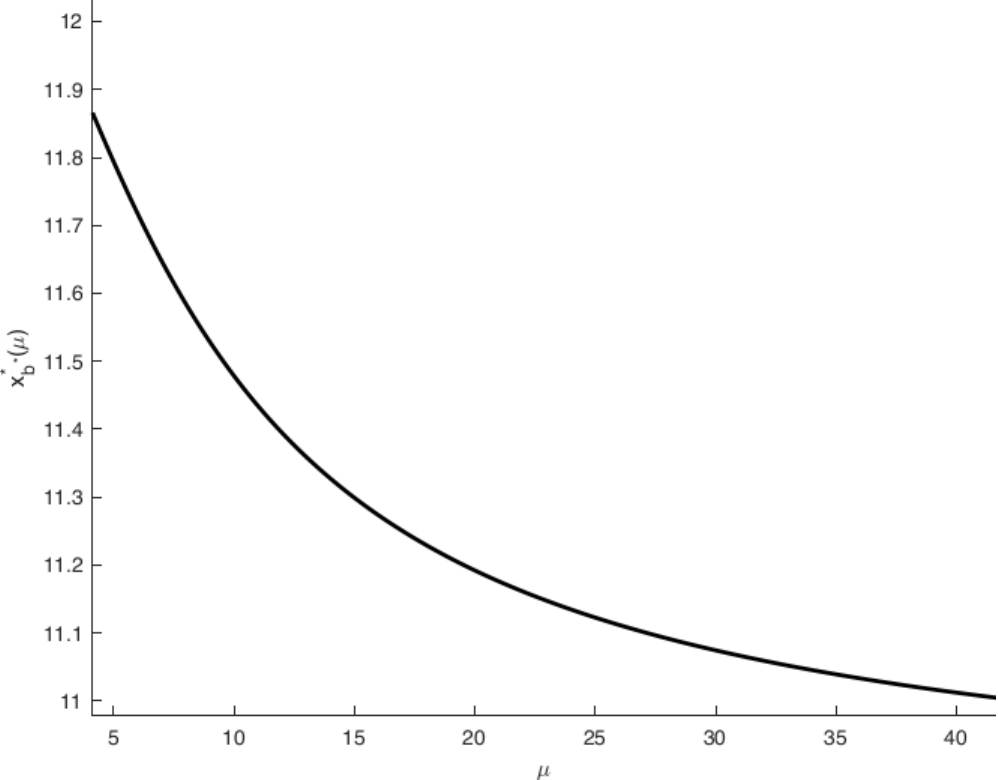} &&
\includegraphics[width=0.27\textwidth]{EOL_exponential_xStarK}\\
x^*_{b^*}(\rho) &&  x^*_{b^*}(\mu) && x^*_{b^*}(K)\\ 
\end{array}
\end{eqnarray*}
\caption{Dependency of $x^*_{b^*}$ with respect to $\rho$, $\mu$, and $K$.}\label{fig4}
\end{figure}

\subsubsection{The case $Z \sim \textrm{Pareto}(\zeta,\alpha)$}

We assume that the claim sizes $Z \sim \textrm{Pareto}(\zeta, \alpha)$, $\alpha > 2$, with density
$$
p(z) = \begin{cases}
0 & \textrm{if } z < \zeta\\
\displaystyle{\frac{\alpha \zeta^\alpha}{z^{\alpha+1}}}  & \textrm{if } z \ge \zeta.
\end{cases}
$$ 
By some computations, we find
\begin{eqnarray*}
M_1(b)&=&\begin{cases}
\displaystyle{\frac{b}{1-b}} & \textrm{if } \displaystyle{b < \frac{\zeta}{1+\zeta}}\\
\displaystyle{\frac{\zeta}{\alpha-1}\left(\alpha-\left(\zeta \frac{1-b}{b}\right)^{\alpha-1}\right)} & \textrm{if } \displaystyle{b \ge \frac{\zeta}{1+\zeta}}
\end{cases}\\
M_2(b) &=& \begin{cases}
\displaystyle{\left(\frac{b}{1-b}\right)^2} & \textrm{if } \displaystyle{b < \frac{\zeta}{1+\zeta}}\\
\displaystyle{\frac{\zeta^2}{\alpha-2}\left(\alpha-2 \left(\zeta \frac{1-b}{b}\right)^{\alpha-2}\right)} & \textrm{if } \displaystyle{b \ge \frac{\zeta}{1+\zeta}}.
\end{cases}
\end{eqnarray*}
Equation \eqref{pol} now reads
\begin{eqnarray*}
\Phi(b,\gamma):= \begin{cases}
\frac{1}{2}\lambda \left(\frac{b}{1-b}\right)^2 \gamma^2+
\lambda \left[\theta \left(\frac{b}{1-b} - \zeta\right) +\eta\zeta\right]\gamma-\rho
& \textrm{if } b < \frac{\zeta}{1+\zeta}\\
\frac{1}{2}\lambda  \frac{\zeta^2}{\alpha-2}\left(\alpha-2 \left(\zeta \frac{1-b}{b}\right)^{\alpha-2}\right) \gamma^2+
\lambda  \left[\frac{\theta\zeta}{\alpha-1} \left( 1-\left(\zeta \frac{1-b}{b}\right)^{\alpha-1}\right)+\eta \zeta\right]\gamma-\rho 
& \textrm{if } b \ge \frac{\zeta}{1+\zeta},
\end{cases}
\end{eqnarray*}
with $\gamma \in  \mathbb R$.
%

We illustrate numerically the sensitivity of the optimal level $b^*$ and of the optimal reinsurance boundary $x^*_{b^*}$ with respect some relevant model's parameters. We choose benchmark values of the parameters as it follows: $\theta=0.5$, $\eta=0.3$, $\lambda=0.05$, $\rho=0.04$, $\zeta=10$, $K=10$.
With such a choice, by Proposition \ref{prop1} and from \eqref{equationPsi} (see Proposition \ref{propEOL}), we obtain $b^*=0.5195$ and $x^*_{b^*}=11.7572$.

Figure \ref{fig5} shows how $b^*$ depends on the parameters $\rho$ and $\zeta$, while Figure \ref{fig6} plots $x^*_{b^*}$ as a function of the parameters $\rho$, $\zeta$ and $K$. We observe behaviors of $b^*$ and $x^*_{b^*}$ which are similar to those observed with respect to $\rho$, $\mu$ and $K$ in the case of an Exponential distribution of the claim size, and which can be then explained through the same rationale. As a matter of fact, in the Pareto distribution, the average and the variance of the sizes of the claims are increasing functions of the only parameter $\zeta$, just as they are functions of $\mu$ in the case of an Exponential distribution.

\begin{figure}[htb!]
\begin{eqnarray*}
\begin{array}{ccccc}
\includegraphics[width=0.27\textwidth]{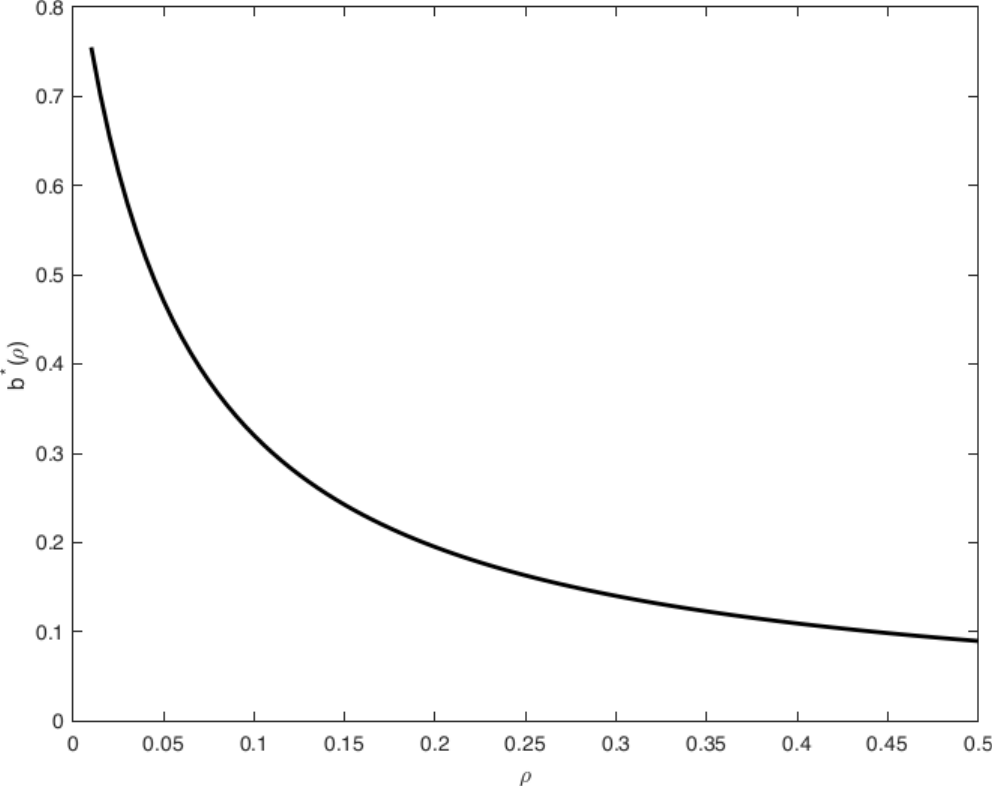} &&
\includegraphics[width=0.27\textwidth]{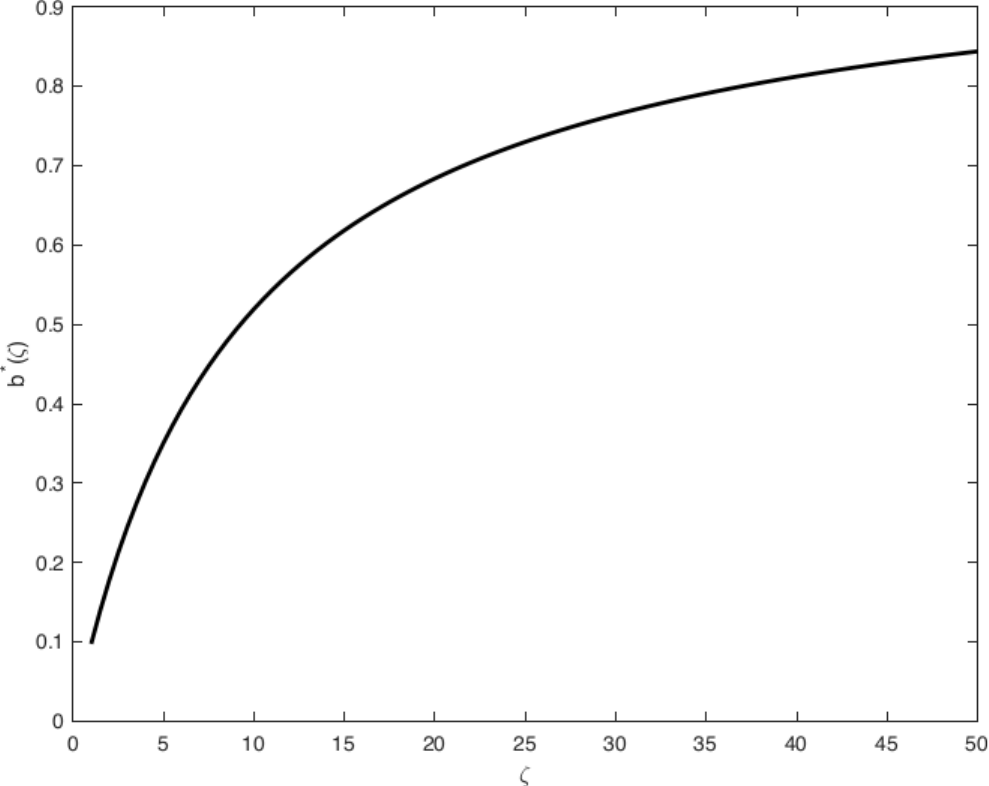} &&
\phantom{\includegraphics[width=0.2\textwidth]{EOL_Pareto_bStarZeta}} \\
b^*(\rho) &&  b^*(\zeta) && \\ 
\end{array}
\end{eqnarray*}
\caption{Dependency of $b^*$ with respect to $\rho$ and $\zeta$.}\label{fig5}
\end{figure}

\begin{figure}[htb!]
\begin{eqnarray*}
\begin{array}{ccccccc}
\includegraphics[width=0.27\textwidth]{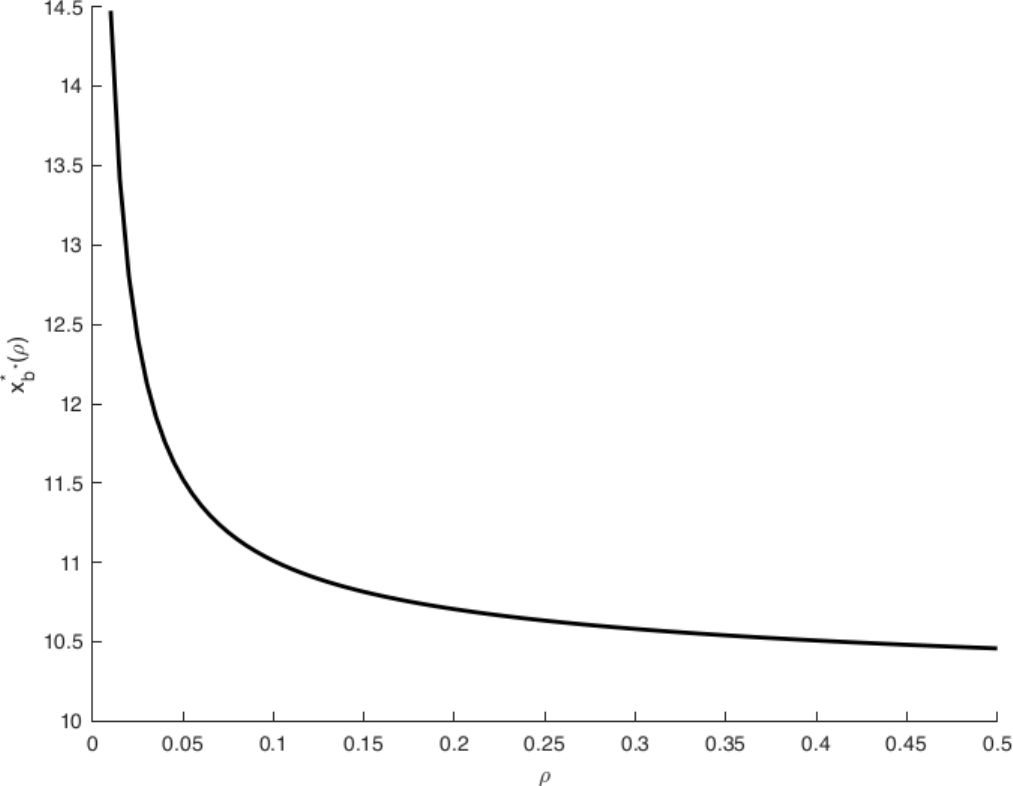} &&
\includegraphics[width=0.27\textwidth]{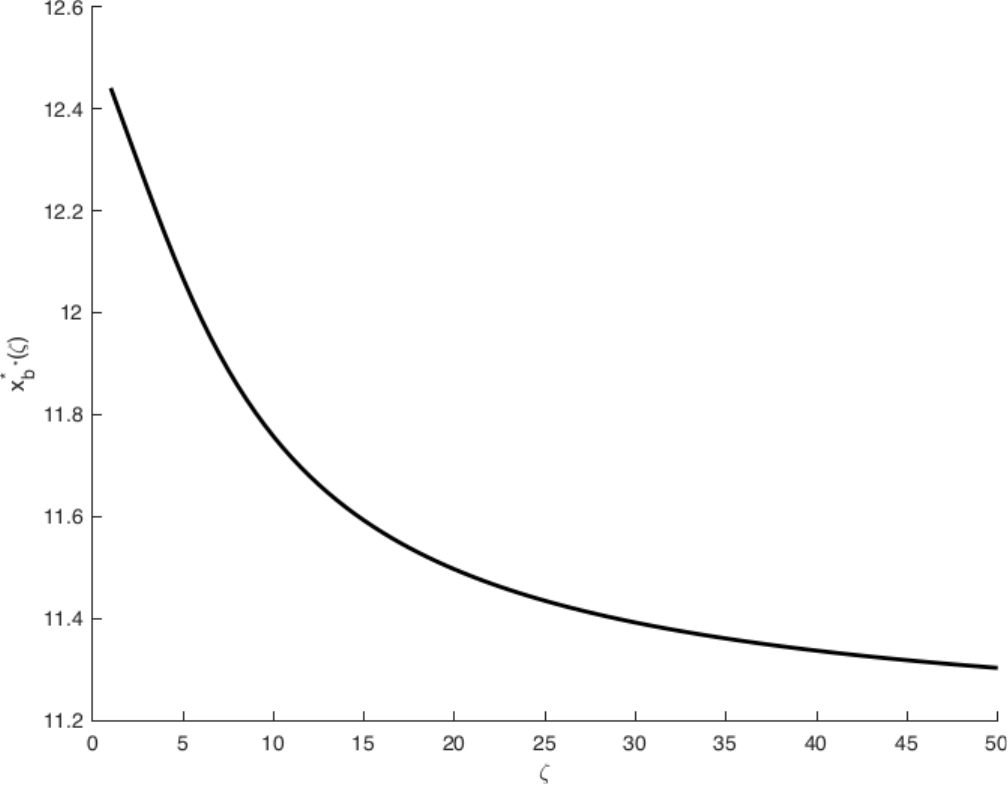} &&
\includegraphics[width=0.27\textwidth]{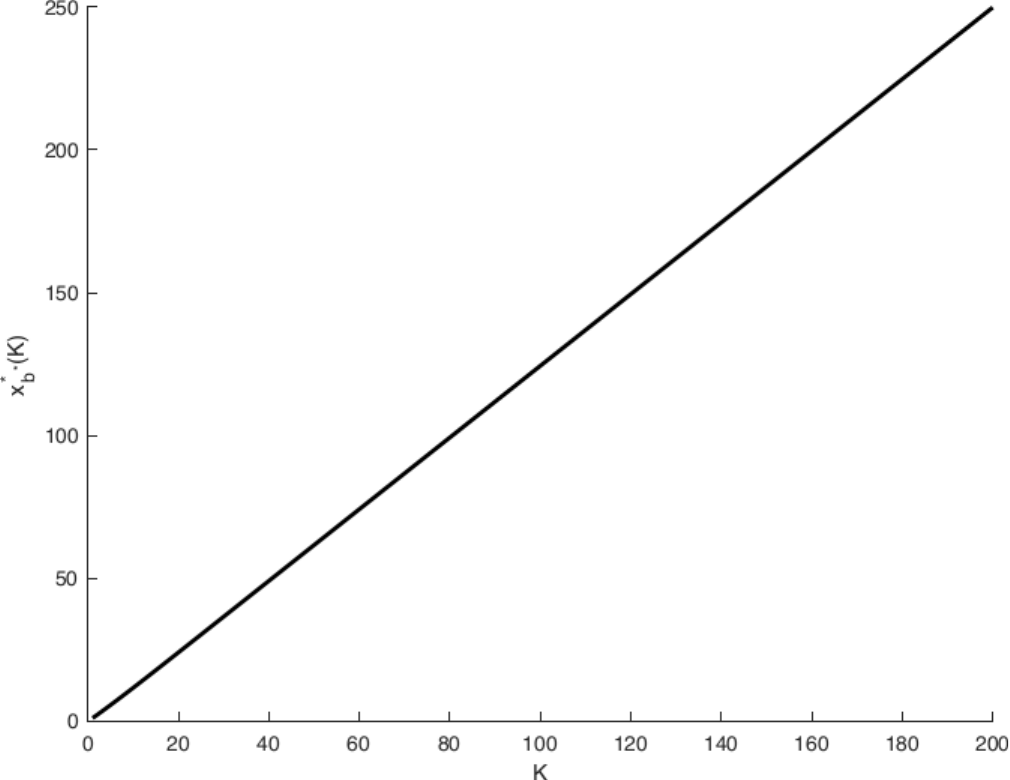}\\
x^*_{b^*}(\rho) &&  x^*_{b^*}(\zeta) && x^*_{b^*}(K)\\ 
\end{array}
\end{eqnarray*}
\caption{Dependency of $x^*_{b^*}$ with respect to $\rho$, $\zeta$ and $K$.}\label{fig6}
\end{figure}

\subsection{Comparison of the value function in the case of proportional and excess-of-loss reinsurance}

In Figure \ref{fig7} we collect drawings of the value function in the case of proportional reinsurance (solid line) and excess-of-loss reinsurance (dashed line),
when the claim size $Z \sim Exp(1/\mu)$ (left panel) and $Z \sim Pareto(\zeta,\alpha)$ (right panel). We observe that, when the claim's size is exponentially distributed, the value function one obtains in the case of proportional reinsurance is smaller than the one related to an excess-of-loss reinsurance, while no uniform comparison can be made in the case of Pareto-distributed claim's size. We thus conclude that (at least for the benchmark values of the parameters that we have used) excess-of-loss reinsurance is not necessarily favourable to proportional reinsurance, a finding that is in contrast to that of Figure 3 in \cite{EisenbergSchmidli2009} (see also the subsequent discussion at page 13 therein).

\begin{figure}[htb!]
\begin{eqnarray*}
\begin{array}{ccc}
\includegraphics[width=0.40\textwidth]{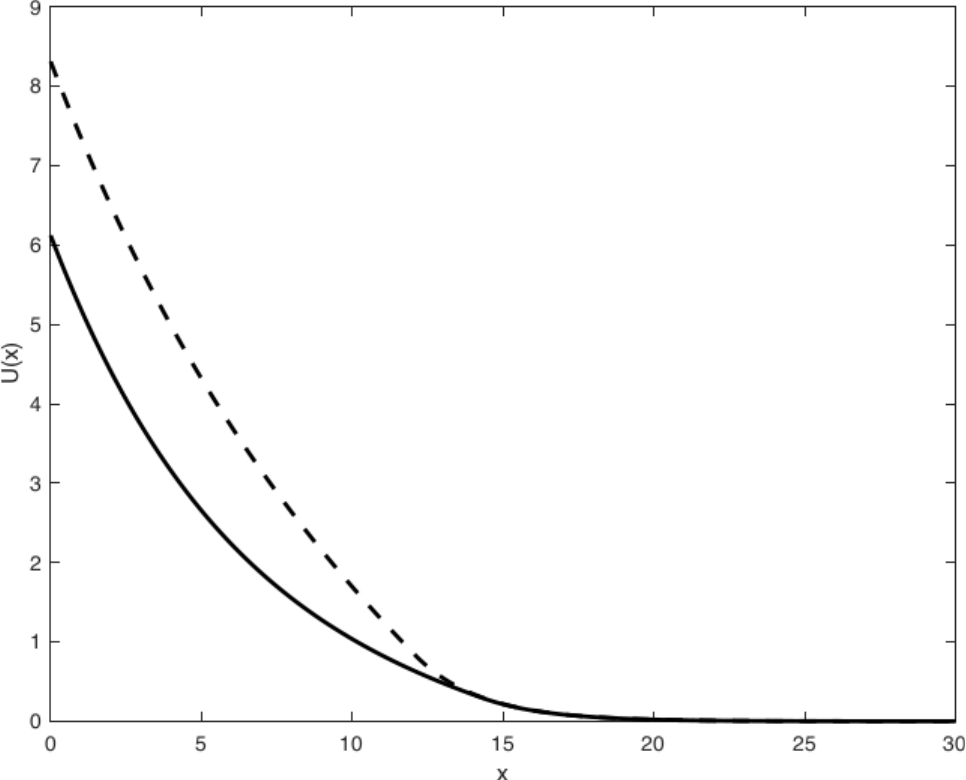} &&
\includegraphics[width=0.40\textwidth]{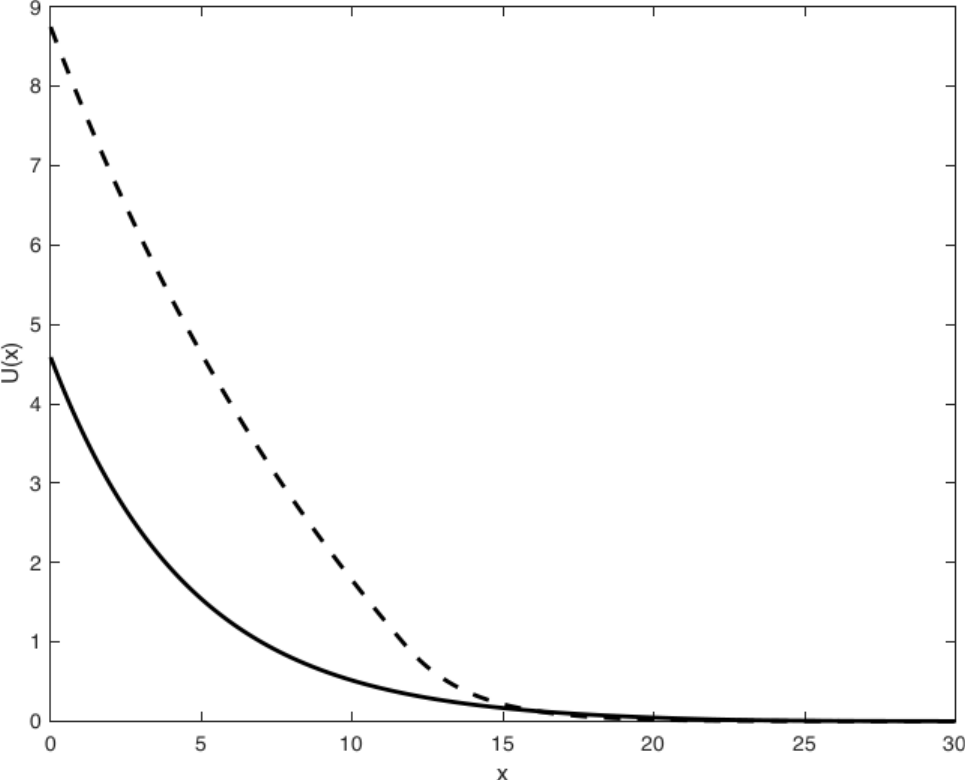}\\
\includegraphics[width=0.40\textwidth]{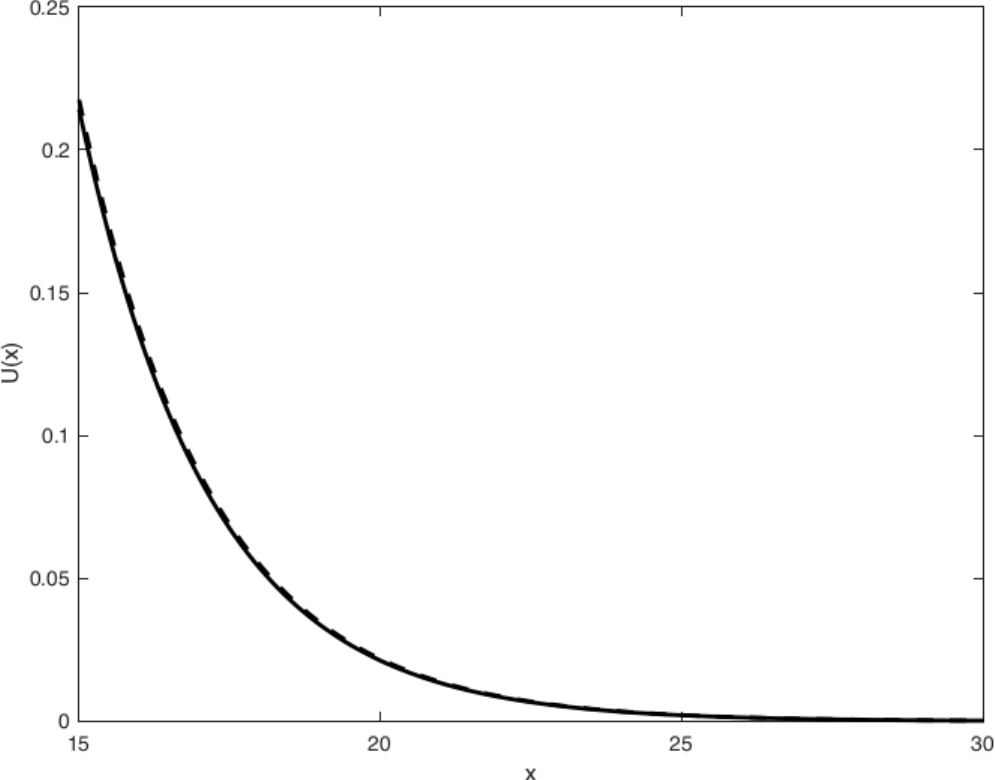} &&
\includegraphics[width=0.40\textwidth]{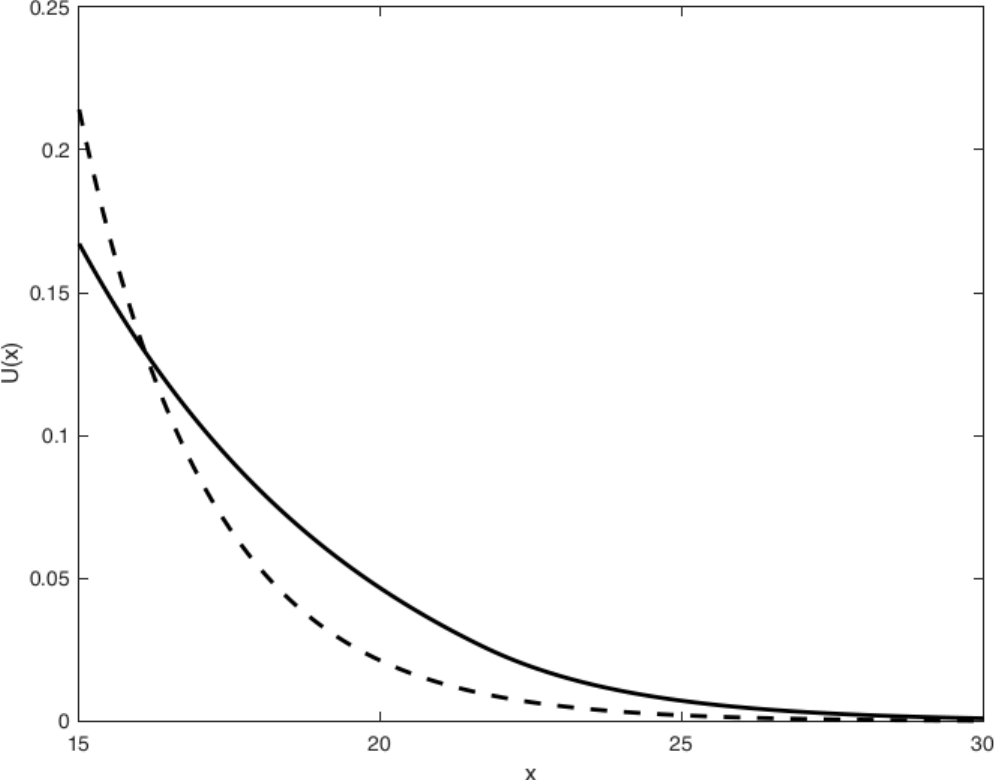}\\
Z \sim Exp(1/\mu) &&  Z \sim Pareto(\zeta,\alpha)\\ 
\end{array}
\end{eqnarray*}
\caption{Value function (zoomed image in the bottom panels) in the case of proportional reinsurance (solid line) and excess-of-loss reinsurance 
(dashed line), when the claim sizes $Z \sim Exp(1/\mu)$ (left panel) and $Z \sim Pareto(\zeta,\alpha)$ (right panel).}\label{fig7}
\end{figure}


\section*{Proof of Proposition \ref{propfb}}
\begin{proof}
From \eqref{GbxAnalyticPos}, \eqref{GbxAnalyticNeg} and \eqref{fb}, the explicit expression of $f_{b^*}$ is:
\begin{eqnarray}\label{fbstar}
f_{b^*}(x) = 
\begin{cases}
\displaystyle{-(x-K)-\frac{1}{\gamma^-(b^*)}+ \frac{1}{\gamma^-(1)}e^{\gamma^-(1)x} }& 0 \le x\le K,\\
\displaystyle{-\frac{1}{\gamma^-(b^*)}e^{\gamma^-(b^*)(x-K)}+\frac{1}{\gamma^-(1)}e^{\gamma^-(1)x}} & x > K,
\end{cases}
\end{eqnarray}
from which $f_{b^*}(0)=\displaystyle{K+ \frac{\gamma^-(b^*) - \gamma^-(1)}{\gamma^-(1) \gamma^-(b^*)}}$ and $\lim_{x \to \infty} f_{b^*}(x)=0$. 
We compute 
\begin{eqnarray*}
f'_{b^*}(x) = 
\begin{cases}
\displaystyle{-1+e^{\gamma^-(1)x} }& 0 < x< K,\\
\displaystyle{-e^{\gamma^-(b^*)(x-K)}+e^{\gamma^-(1)x}} & x > K.
\end{cases}
\end{eqnarray*}
Then, $f_{b^*}$ is strictly decreasing in $[0,\hat x_{b^*}]$ and strictly increasing in $[\hat x_{b^*},\infty)$, where $\hat x_{b^*}$ is defined in \eqref{xHat}.
The point $\hat x_{b^*}$ is the unique global minimum point of $f_{b^*}$, whose minimum value is:
$$
f_{b^*}(\hat x_{b^*})=\displaystyle{-\frac{\gamma^-(1)-\gamma^-(b^*)}{\gamma^-(1)\gamma^-(b^*)}e^{-\frac{\gamma^-(1)\gamma^-(b^*)}{\gamma^-(1)-\gamma^-(b^*)}}}.
$$ 

If $-K \gamma^-(1) \gamma^-(b^*) \le\gamma^-(b^*) - \gamma^-(1)<0$, then $f_{b^*}(0)\ge 0$ and item (i) follows. 
Otherwise, if $\displaystyle{\gamma^-(b^*) - \gamma^-(1)}<-K \gamma^-(1) \gamma^-(b^*)$, then $f_{b^*}(0)< 0$ and item (ii) follows.
\end{proof}

\section*{Proof of Theorem \ref{th:ver}}
\begin{proof}
Let $x\ge0$, $T > 0$, $\tau_n:= \inf\{t \ge 0 \ : \ X^x_t \ge n\}$, $n\geq 0$, and $\tau \in \mathcal T$.
Applying a change of variable formula for semimartingales (see e.g.~\cite{CaiDeAngelis}, Theorem 2.1) to $\{e^{-\rho t}w(X^x_t), t \in [0, \tau_n \wedge \tau \wedge T]\}$ and then taking expectations we find:
$$
\E\left[ e^{-\rho(\tau_n \wedge \tau \wedge T)} w(X^x_{\tau_n \wedge \tau \wedge T})\right]=
w(x) + \E\left[\int_0^{\tau_n \wedge \tau \wedge T} e^{-\rho s} \left(\left ( \mathcal L - \rho \right) w\right) (X^x_s) ds
+\int_0^{\tau_n \wedge \tau \wedge T} e^{-\rho s} w'(X^x_s) dI_s\right],
$$
where the Brownian-local martingale term has vanished in expectation since $w \in C^1([0,\infty); \R)$ (by Sobolev embedding) and 
because of the definition of $\tau_n$. With regards to the fact that $w$ solves \eqref{VI} and $t \mapsto I_t$ increases on $\{t \ge 0 \ : \ X^x_t = 0\}$,
we have (after rearranging terms)
$$
w(x) \le \E\left[ e^{-\rho(\tau_n \wedge \tau \wedge T)} f_{b^*} (X^x_{\tau_n \wedge \tau \wedge T})\right ].
$$ 
As $f_{b^*}$ is bounded (cf.\ Proposition \ref{propfb}), by sending $n \uparrow \infty$ and $T \uparrow \infty$, by the dominated convergence theorem we obtain
$$
w(x) \le \E\left[ e^{-\rho\tau} f_{b^*} (X^x_\tau)\right ].
$$
Given the arbitrariness of $\tau \in \mathcal{T}$ and $x\geq 0$, we have 
$$
w \le F_{b^*} \textrm{ on } [0,\infty).
$$
Repeating now the same arguments above, but with $\tau$ replaced by $\tau^*(b^*)$, we find (by definition of $\tau^*(b^*)$)
$$
w(x) = \E\left[ e^{-\rho\tau^*(b^*)} f_{b^*} \left(X^x_{\tau^*(b^*)}\right)\right ].
$$
Hence,
$$
w(x) \ge \inf_{\tau \in \mathcal T} \E\left[ e^{-\rho\tau} f_{b^*} (X^x_{\tau})\right ] = F_{b^*}(x).
$$
Given the arbitrariness of $x \ge 0$, we have $w \ge F_{b^*}$, which, together with the previously proved $w \le F_{b^*}$, 
implies that $w=F_{b^*}$ and that $\tau^*(b^*)$ is optimal.
\end{proof}

\section*{Proof of Proposition \ref{prop1}}
\begin{proof}
{\bf Step 1.} We here prove existence and uniqueness of $x^*_{b^*}$.
Using \eqref{functionH}, we rewrite problem \eqref{FB} as follows
\begin{eqnarray}\label{freeBound}
\left\{\begin{array}{l}
C_1\gamma^-(1)+C_2\gamma^+(1)=0\\
C_1 e^{\gamma^-(1)x}+C_2 e^{\gamma^+(1)x}=f_{b^*}(x)\\
C_1 \gamma^-(1) e^{\gamma^-(1)x}+C_2 \gamma^+(1) e^{\gamma^+(1)x}=f_{b^*}'(x),
\end{array}\right.
\end{eqnarray}
where the explicit expression of $f_{b^*}$ is given in \eqref{fbstar}. The first equation yields:
\begin{eqnarray}\label{expressionC1C2}
C_2=-C_1\frac{\gamma^-(1)}{\gamma^+(1)},
\end{eqnarray}
whereas the second and third equation depend on the expression of $f_{b^*}$. We consider two distinct cases.\\
(i) If $0 \le x \le K$, then the second and the third equations of \eqref{freeBound} become
\begin{eqnarray}\label{system1}
\left\{\begin{array}{l}
C_1  \gamma^-(1)  \left(e^{\gamma^-(1)x} -\frac{\gamma^-(1)}{\gamma^+(1)}e^{\gamma^+(1)x}\right)=
-\gamma^-(1) (x-K)-\frac{\gamma^-(1) }{\gamma^-(b^*)}+ e^{\gamma^-(1)x}\\
C_1 \gamma^-(1) \left(e^{\gamma^-(1)x}-  e^{\gamma^+(1)x}\right)=-1+ e^{\gamma^-(1)x}.
\end{array}\right.
\end{eqnarray}
Since $\gamma^-(1) \neq \gamma^+(1)$, the previous system yields:
\begin{eqnarray*}
\frac{-\gamma^-(1) (x-K)-\frac{\gamma^-(1) }{\gamma^-(b^*)}+ e^{\gamma^-(1)x}}{e^{\gamma^-(1)x} -\frac{\gamma^-(1)}{\gamma^+(1)}e^{\gamma^+(1)x}}&=&
\frac{-1+ e^{\gamma^-(1)x}}{e^{\gamma^-(1)x}-  e^{\gamma^+(1)x}},
\end{eqnarray*}
which can be rewritten as
\begin{eqnarray}\label{eqToSolve1}
F_1(x)=F_2(x),
\end{eqnarray}
where
\begin{eqnarray*}
\begin{array}{lll}
F_1(x) &:=& \displaystyle{\left(x-K+\frac{1}{\gamma^-(b^*)}\right)\left(e^{-\gamma^-(1) x} - e^{-\gamma^+(1) x}\right)}\\
F_2(x) &:=& \displaystyle{\frac{1}{\gamma^-(1)}\left(1-e^{-\gamma^+(1) x}\right)-\frac{1}{\gamma^+(1)}\left(1-e^{-\gamma^-(1) x}\right)}.
\end{array}
\end{eqnarray*}
We have $F_1(0)=F_2(0)=0$ and
\begin{eqnarray*}
F_1'(x)&=&\gamma^+(1)\left(x-K+\frac{1}{\gamma^-(b^*)}-\frac{1}{\gamma^+(1)}\right)e^{-\gamma^+(1) x}
-\gamma^-(1)\left(x-K+\frac{1}{\gamma^-(b^*)}-\frac{1}{\gamma^-(1)}\right)e^{-\gamma^-(1) x}\\
F_2'(x)&=&\frac{\gamma^+(1)}{\gamma^-(1)}e^{-\gamma^+(1) x}-\frac{\gamma^-(1)}{\gamma^+(1)}e^{-\gamma^-(1) x}.
\end{eqnarray*}
From \eqref{pol1} we note that 
$$
\frac{1}{\gamma^-(1)}+\frac{1}{\gamma^+(1)} = \frac{\gamma^-(1) + \gamma^+(1)}{\gamma^-(1) \gamma^+(1)} = \frac{\lambda \eta \mu}{\rho}>0.
$$
Consequently $\displaystyle{\frac{1}{\gamma^-(b^*)}<\frac{1}{\gamma^+(1)}+\frac{1}{\gamma^-(1)}}$ and, for each $x \in [0,K]$, it holds: 
\begin{eqnarray*}
F_1'(x)&=&\gamma^+(1)\left(x-K+\frac{1}{\gamma^-(b^*)}-\frac{1}{\gamma^+(1)}\right)e^{-\gamma^+(1) x}
-\gamma^-(1)\left(x-K+\frac{1}{\gamma^-(b^*)}-\frac{1}{\gamma^-(1)}\right)e^{-\gamma^-(1) x}\\
&\le& \gamma^+(1)\left(\frac{1}{\gamma^-(b^*)}-\frac{1}{\gamma^+(1)}\right)e^{-\gamma^+(1) x}
-\gamma^-(1)\left(\frac{1}{\gamma^-(b^*)}-\frac{1}{\gamma^-(1)}\right)e^{-\gamma^-(1) x} \\
&<& \frac{\gamma^+(1)}{\gamma^-(1)}e^{-\gamma^+(1) x}-\frac{\gamma^-(1)}{\gamma^+(1)}e^{-\gamma^-(1) x} = F_2'(x).
\end{eqnarray*} 
Hence, the unique solution of \eqref{eqToSolve1} in $[0,K]$ is $x=0$, and \eqref{system1} and \eqref{expressionC1C2} yield $C_1=C_2=0$.\\

\noindent
(ii) If $x >K$, then the second and the third equation of \eqref{freeBound} become
\begin{eqnarray}\label{system2}
\left\{\begin{array}{l}
C_1  \gamma^-(1)  \left(e^{\gamma^-(1)x} -\frac{\gamma^-(1)}{\gamma^+(1)}e^{\gamma^+(1)x}\right)=
-\frac{\gamma^-(1)}{\gamma^-(b)}e^{\gamma^-(b^*)(x-K)}+e^{\gamma^-(1)x}\\
C_1 \gamma^-(1) \left(e^{\gamma^-(1)x}-  e^{\gamma^+(1)x}\right)=-e^{\gamma^-(b^*)(x-K)}+e^{\gamma^-(1)x}.
\end{array}\right.
\end{eqnarray}
Since $\gamma^-(1) \neq \gamma^+(1)$,  the previous system yields:
\begin{eqnarray*}
\frac{-\frac{\gamma^-(1)}{\gamma^-(b^*)}e^{\gamma^-(b^*)(x-K)}+e^{\gamma^-(1)x}}{e^{\gamma^-(1)x} -\frac{\gamma^-(1)}{\gamma^+(1)}e^{\gamma^+(1)x}}
&=&
\frac{-e^{\gamma^-(b^*)(x-K)}+e^{\gamma^-(1)x}}{e^{\gamma^-(1)x}-  e^{\gamma^+(1)x}},
\end{eqnarray*}
which can be rewritten as \eqref{eqToSolve}; that is,
\begin{eqnarray}\label{eqToSolveBis}
\Theta(x)=D(K),
\end{eqnarray}
where
\begin{eqnarray*}
\Theta(x):= \gamma^+(1)(\gamma^-(b^*)-\gamma^-(1))e^{(\gamma^-(b^*)-\gamma^+(1))x}-\gamma^-(1)(\gamma^-(b^*)-\gamma^+(1))e^{(\gamma^-(b^*)-\gamma^-(1))x}
\end{eqnarray*}
and 
\begin{eqnarray*}
D(K):=\gamma^-(b^*)(\gamma^+(1)-\gamma^-(1))e^{\gamma^-(b^*)K} <0.
\end{eqnarray*}
We have 
\begin{eqnarray*}
\Theta(0)=\gamma^-(b^*)(\gamma^+(1)-\gamma^-(1)) \le 
\gamma^-(b^*)(\gamma^+(1)-\gamma^-(1))e^{\gamma^-(b^*)K} =D(K).
\end{eqnarray*}
and 
\begin{eqnarray*}
\Theta'(x)= (\gamma^-(b^*)-\gamma^-(1))(\gamma^-(b^*)-\gamma^+(1))\left(\gamma^+(1)e^{-\gamma^+(1)x}-\gamma^-(1)e^{-\gamma^-(1)x}\right)e^{\gamma^-(b^*)x}>0.
\end{eqnarray*}
Since $\Theta$ is strictly increasing and $\lim_{x \to +\infty} \Theta(x)=0$, \eqref{eqToSolveBis} (and \eqref{eqToSolve}) admits in $[0,+\infty)$ the unique positive 
solution $x^*_{b^*}=x^*_{b^*}(K)=\Theta^{-1}(D(K))>0$. Further, since $D'(K)=(\gamma^-(b^*))^2(\gamma^+(1)-\gamma^-(1))e^{\gamma^-(b^*)K}>0$, then 
$x^*_{b^*}$ is strictly increasing in $K$.\\

\noindent
{\bf Step 2.} We show that $x^*_{b^*} \le \hat x_{b^*}$. Because
\begin{eqnarray*}
\Theta(\hat x_{b^*})&=& \gamma^+(1)(\gamma^-(b^*)-\gamma^-(1))e^{\frac{\gamma^-(b^*)-\gamma^+(1)}{\gamma^-(b^*)-\gamma^-(1)}\gamma^-(b^*) K}
-\gamma^-(1)(\gamma^-(b^*)-\gamma^+(1))e^{\gamma^-(b^*) K}\\
&=& \left(\gamma^+(1)(\gamma^-(b^*)-\gamma^-(1)) e^{\frac{\gamma^-(1)-\gamma^+(1)}{\gamma^-(b^*)-\gamma^-(1)}\gamma^-(b^*) K}
-\gamma^-(1)(\gamma^-(b^*)-\gamma^+(1))\right)e^{\gamma^-(b^*) K}\\
&\ge& \left(\gamma^+(1)(\gamma^-(b^*)-\gamma^-(1))-\gamma^-(1)(\gamma^-(b^*)-\gamma^+(1))\right)e^{\gamma^-(b^*) K}\\
&=& \gamma^-(b^*)(\gamma^+(1)-\gamma^-(1))e^{\gamma^-(b^*) K}=D(K),
\end{eqnarray*}
then the monotonicity of $\Theta$ yields $x^*_{b^*}\le\hat x_{b^*}=\displaystyle{\frac{\gamma^-(b^*)}{\gamma^-(b^*) - \gamma^-(1) }}K$. \\

\noindent
{\bf Step 3.} We now aim at proving that $x^*_{b^*} \ge K$. Notice that
\begin{eqnarray*}
\Theta(K)&=& e^{\gamma^-(b^*) K}\left(\gamma^+(1)(\gamma^-(b^*)-\gamma^-(1))e^{-\gamma^+(1)K}-\gamma^-(1)(\gamma^-(b^*)-\gamma^+(1))e^{-\gamma^-(1) K}\right).
\end{eqnarray*}
Let
$$
S(K):=\gamma^+(1)(\gamma^-(b^*)-\gamma^-(1))e^{-\gamma^+(1)K}-\gamma^-(1)(\gamma^-(b^*)-\gamma^+(1))e^{-\gamma^-(1) K}, \;\;\; K\ge 0.
$$
It holds $S(0)=\gamma^-(b^*)(\gamma^+(1)-\gamma^-(1))<0$ and
\begin{eqnarray*}
S'(K)&=&-\gamma^+(1)^2(\gamma^-(b^*)-\gamma^-(1))e^{-\gamma^+(1)K}+\gamma^-(1)^2(\gamma^-(b^*)-\gamma^+(1))e^{-\gamma^-(1) K}\\
&=& -\gamma^+(1)^2(\gamma^-(b^*)-\gamma^-(1))e^{-\gamma^-(1) K}\left(e^{(\gamma^-(1)-\gamma^+(1))K}
-\frac{\gamma^-(1)^2\left(\gamma^-(b^*)-\gamma^+(1)\right)}{\gamma^+(1)^2 \left(\gamma^-(b^*)-\gamma^-(1)\right)}\right).
\end{eqnarray*}
Since $\gamma^-(1)+\gamma^+(1)<0$, then $\frac{\gamma^-(1)^2\left(\gamma^-(b^*)-\gamma^+(1)\right)}{\gamma^+(1)^2 \left(\gamma^-(b^*)-\gamma^-(1)\right)}>1$.
Therefore, $S'(K)<0$ for each $K \ge 0$, so that 
$$
\Theta(K) = e^{\gamma^-(b^*) K} S(K)\le e^{\gamma^-(b^*) K} S(0)=\gamma^-(b^*)(\gamma^+(1)-\gamma^-(1))e^{\gamma^-(b^*) K}=D(K)
$$
and $K\le x^*_{b^*}$ by monotonicity of $\Theta$.\\


\noindent
{\bf Step 4.} Finally, using \eqref{system2} and \eqref{expressionC1C2} we get $C_1=\frac{1}{\gamma^-(1)}B$ and $C_2=\frac{1}{\gamma^+(1)}B$, where 
\begin{eqnarray}\label{expressionB1}
B=\displaystyle{\frac{e^{\gamma^-(b^*)(x^*_{b^*}-K)}-e^{\gamma^-(1) x^*_{b^*}}}{e^{\gamma^+(1) x^*_{b^*}}-e^{\gamma^-(1) x^*_{b^*}}}}. 
\end{eqnarray}
We rewrite \eqref{eqToSolve} (or equivalently $\Theta(x)=D(K)$) as follows:
\begin{eqnarray}\label{equivalent}
e^{\gamma^-(b^*)(x^*_{b^*}-K)} H(x^*_{b^*})= \gamma^{-}(b^*)\left(\gamma^+(1)-\gamma^-(1)\right)e^{(\gamma^-(1)+\gamma^+(1))x^*_{b^*}},
\end{eqnarray}
where $H(x^*_{b^*})$ is given in \eqref{expressionH}.
Using \eqref{equivalent} in \eqref{expressionB1} we get
\begin{eqnarray*}
B &=& \displaystyle{\frac{e^{\gamma^-(b^*)(x^*_{b^*}-K)}-e^{\gamma^-(1) x^*_{b^*}}}{e^{\gamma^+(1) x^*_{b^*}}-e^{\gamma^-(1) x^*_{b^*}}}}\\
&=& \displaystyle{\left( \frac{\gamma^{-}(b^*)\left(\gamma^+(1)-\gamma^-(1)\right)e^{\gamma^+(1)x^*_{b^*}}}{H(x^*_{b^*})} -1 \right)
\frac{e^{\gamma^-(1) x^*_{b^*}} } {e^{\gamma^+(1) x^*_{b^*}}-e^{\gamma^-(1) x^*_{b^*}} } }\\
&=& \displaystyle{ 
\frac{\gamma^+(1)(\gamma^-(b)-\gamma^-(1))(e^{\gamma^+(1) x^*_{b^*}}-e^{\gamma^-(1) x^*_{b^*}} )}{H(x^*_{b^*})} 
\frac{e^{\gamma^-(1) x^*_{b^*}} } {e^{\gamma^+(1) x^*_{b^*}}-e^{\gamma^-(1) x^*_{b^*}} } }\\
&=& \displaystyle{ \gamma^+(1)(\gamma^-(b)-\gamma^-(1))
\frac{e^{\gamma^-(1) x^*_{b^*}}}{H(x^*_{b^*})}   =: B(x^*_{b^*})}
\end{eqnarray*}
as given by \eqref{expressionB}. 
Since $\gamma^-(b)<\gamma^-(1)$ and $H<0$, then $B(x^*_{b^*}) >0$; further, $H<\gamma^+(1)(\gamma^-(b)-\gamma^-(1))e^{\gamma^-(1) x^*_{b^*}}$ and
therefore $B(x^*_{b^*})<1$.
\end{proof}

\section*{Proof of Theorem \ref{prop2}}
\begin{proof}
In order to prove that $w \equiv V$, we need to show that:
\begin{eqnarray}\label{uEquivU}
\left\{\begin{array}{lll}
w(x) \le f_{b^*}(x), &&\forall x > 0\\
w \textrm{ is s.t. } 0 \le (\mathcal L-\rho)w(x), &&\forall x > 0,
\end{array}\right. 
\end{eqnarray}
which is implied by the following two conditions:
\begin{eqnarray*}
\begin{array}{lllll}
\textrm{(i)} && w(x) \le f_{b^*}(x), &&\forall 0< x \le x^*_{b^*}\\
\textrm{(ii)} && 0 \le (\mathcal L-\rho)w(x), &&\forall x > x^*_{b^*}.
\end{array}
\end{eqnarray*}

\noindent
We start proving (i). By Proposition \ref{prop1} 
$$
w(x) = \displaystyle{ B(x^*_{b^*})\left(\frac{1}{\gamma^-(1)} e^{\gamma^-(1)x} - \frac{1}{\gamma^+(1)}e^{\gamma^+(1)x} \right)},
$$
where $B(x^*_{b^*})$ is given by \eqref{expressionB}. 
We compute $w(0)=B(x^*_{b^*})\left(\frac{1}{\gamma^-(1)} - \frac{1}{\gamma^+(1)}\right)$ and $f_{b^*}(0) = K-\frac{1}{\gamma^-(b^*)}+\frac{1}{\gamma^-(1)}$, 
so that $w(0)\le f_{b^*}(0)$ if and only if 
\begin{eqnarray*}
K &\ge& \displaystyle{B(x^*_{b^*})\left(\frac{1}{\gamma^-(1)} - \frac{1}{\gamma^+(1)}\right)+\frac{1}{\gamma^-(b^*)}-\frac{1}{\gamma^-(1)}}\\
&=& \displaystyle{\frac{(\gamma^-(b^*)-\gamma^+(1))(\gamma^-(b^*)-\gamma^-(1))}{\gamma^-(b^*)}\frac{e^{\gamma^+(1)x^*_{b^*}} - e^{\gamma^-(1)x^*_{b^*}}}{H(x^*_{b^*})}}
=Q(x^*_{b^*}),
\end{eqnarray*}
where, for $x>0$,
$$
Q(x):= \displaystyle{\frac{(\gamma^-(b^*)-\gamma^+(1))(\gamma^-(b^*)-\gamma^-(1))}{\gamma^-(b^*)}\frac{e^{\gamma^+(1)x} - e^{\gamma^-(1)}}{H(x)}},
$$
and (cf.~\eqref{expressionH})
$$
H(x):=\gamma^+(1)\left(\gamma^-(b^*)-\gamma^-(1)\right)e^{\gamma^-(1)x} - \gamma^-(1)\left(\gamma^-(b^*)-\gamma^+(1)\right)e^{\gamma^+(1)x}<0.
$$
By some computations we have
$$
Q'(x)=\displaystyle{(\gamma^-(b^*)-\gamma^+(1))(\gamma^-(b^*)-\gamma^-(1))(\gamma^+(1)-\gamma^-(1))^2
\frac{e^{(\gamma^+(1)+\gamma^-(1))x} }{(H(x))^2}}>0.
$$
Recalling that $x^*_{b^*}=x^*_{b^*}(K)=\Theta^{-1}(D(K))$ (see Step 1 in the proof of Proposition \eqref{prop1}), it holds 
$Q(x^*_{b^*}(0))=Q(0)=0$.
Further,
$$
\frac{\partial Q(x^*_{b^*}(K))}{\partial K}=Q'(x^*_{b^*}(K)) \cdot (x^*_{b^*})'(K).
$$
Since $x^*_{b^*}$ is strictly increasing in $K$ it follows that $Q(x^*_{b^*}(K))$ is strictly increasing in $K$ and
\begin{eqnarray*}
\frac{\partial Q(x^*_{b^*}(0))}{\partial K}&=&Q'(0) \cdot (x^*_{b^*})'(0)\\ 
&=&\displaystyle{\frac{(\gamma^-(b^*)-\gamma^+(1))(\gamma^-(b^*)-\gamma^-(1))}{\gamma^-(b^*)^2}\cdot
\frac{\gamma^-(b^*)^2}{(\gamma^-(b^*)-\gamma^-(1))(\gamma^-(b^*)-\gamma^+(1))}}=1.
\end{eqnarray*}
In order to study the concavity of $K \mapsto Q(x^*_{b^*}(K))$, we compute 
\begin{eqnarray}\label{secondDeriv}
\frac{\partial^2 Q(x^*_{b^*}(K))}{\partial K^2}=
Q''(x^*_{b^*}(K))\cdot \left((x^*_{b^*})'(K)\right)^2+Q'(x^*_{b^*}(K))\cdot (x^*_{b^*})''(K).
\end{eqnarray}
We find
\begin{eqnarray*}
Q''(x)=
\displaystyle{(\gamma^-(b^*)-\gamma^+(1))(\gamma^-(b^*)-\gamma^-(1))(\gamma^+(1)-\gamma^-(1))^3
\frac{e^{(\gamma^+(1)+\gamma^-(1))x} }{(H(x))^3}}A(x)
\end{eqnarray*}
where 
\begin{eqnarray*}
A(x):=\gamma^+(1)(\gamma^-(b^*)-\gamma^-(1)) e^{\gamma^-(1)x}+ \gamma^-(1)(\gamma^-(b^*)-\gamma^+(1)) e^{\gamma^+(1)x}.
\end{eqnarray*}
Since $\gamma^+(1)+\gamma^-(1)=-\displaystyle{\frac{2\eta \mu}{\sigma^2}} <0$, then $A(x^*_{b^*}(0))=A(0)=\gamma^-(b^*)(\gamma^+(1)+\gamma^-(1)) -2 \gamma^-(1)\gamma^+(1)>0$; further, 
$$
A'(x)=\gamma^+(1)\gamma^-(1)\left((\gamma^-(b^*)-\gamma^-(1)) e^{\gamma^-(1)x}+(\gamma^-(b^*)-\gamma^+(1)) e^{\gamma^+(1)x}\right)
>0,
$$
therefore $A(x)\ge 0$ for each $x\ge 0$, and consequently, since $H(x) <0$ for each $x\ge 0$, it holds $Q''(x)\le 0$. In particular, 
\begin{eqnarray}\label{deriv1}
Q''(x^*_{b^*}(K))\le 0, \ \ \ \ \textrm{ for each } K \ge 0.
\end{eqnarray}
Using that $x^*_{b^*}=x^*_{b^*}(K)$ is the unique solution to the equation $\Psi(x,K)=\Theta(x)-D(K)=0$ (see again step 1 of the proof of Proposition \ref{prop1}), 
we have (see \cite{BrockThompson}) 
$(x^*_{b^*})''(K)=\displaystyle{\left(\frac{\partial \Psi}{\partial K}\frac{\partial^2 \Psi}{\partial K \partial x} -\frac{\partial \Psi}{\partial x}\frac{\partial^2 \Psi}{\partial K^2}\right) /\left(\frac{\partial \Psi}{\partial x}\right)^2}$, 
so that the sign of $(x^*_{b^*})''(K)$ coincides with the sign of the following quantity:
\begin{eqnarray}\label{deriv2}
\displaystyle{\frac{\partial \Psi}{\partial K}\frac{\partial^2 \Psi}{\partial K \partial x} -\frac{\partial \Psi}{\partial x}\frac{\partial^2 \Psi}{\partial K^2} }=
\Theta'(x) \cdot D''(K) <0.
\end{eqnarray}
Using \eqref{secondDeriv}, \eqref{deriv1} and \eqref{deriv2}, we get $\frac{\partial^2 Q(x^*_{b^*}(K)}{\partial K^2}<0$;
consequently, $K \ge Q(x^*_{b^*}(K))$, and $w(0)\le f_{b^*}(0)$.
Further, $w$ is negative and, because 
$$
w'(x)=B\left(e^{\gamma^-(1)x} -e^{\gamma^+(1)x}\right) <0,
$$
$w$ is strictly decreasing. We recall by Proposition \ref{propfb} that $f_{b^*}$ is strictly decreasing in $[0,\hat x_{b^*}]$.
If $x \in [0,K]$ then 
$$
w'(x)=B\left(e^{\gamma^-(1)x} -e^{\gamma^+(1)x}\right) < e^{\gamma^-(1)x} -e^{\gamma^+(1)x} < e^{\gamma^-(1)x} -1=f_{b^*}'(x).
$$
Consequently, it holds $w(x) \le f_{b^*}(x)$ for each $x \in [0,K]$. 
On the other hand, since $\displaystyle{x^*_{b^*}>K>\frac{\gamma^-(b)K}{\gamma^-(b)-\gamma^+(1)}}$, then for each $x \in [K,x^*_{b^*}]$
$$
w'(x)=B\left(e^{\gamma^-(1)x} -e^{\gamma^+(1)x}\right) < e^{\gamma^-(1)x} -e^{\gamma^+(1)x} < e^{\gamma^-(1)x} -e^{\gamma^-(b)(x-K)}=f_{b^*}'(x),
$$
which, coupled with $w(K)<f_{b^*}(K)$, yields $w(x) \le f_{b^*}(x)$ for each $x \in [K,x^*_{b^*}]$.

\bigskip
We now prove (ii). For each $x>x^*_{b^*}>K$ we have
\begin{eqnarray}\label{conto}
\ \ \ (\mathcal L-\rho)f_{b^*}(x)&=&  (\mathcal L-\rho)\left(-\frac{1}{\gamma^-(b^*)}e^{\gamma^-(b^*)(x-K)}+\frac{1}{\gamma^-(1)}e^{\gamma^-(1)x}\right) \nonumber\\
&=& -\frac{1}{\gamma^-(b^*)}e^{\gamma^-(b^*)(x-K)}\left(\frac{1}{2}\lambda \sigma^2(\gamma^-(b^*))^2
+\lambda(\theta \mu - (\theta - \eta)\mu)\gamma^-(b^*)-\rho\right) \nonumber \\
&=& -\frac{1}{\gamma^-(b^*)}e^{\gamma^-(b^*)(x-K)} \Phi(1,\gamma^-(b^*)).
\end{eqnarray}
Since $\gamma^-(1)$ is the unique negative solution to $\Phi(1,\gamma)=0$ and because $\gamma^-(b^*)<\gamma^-(1)$, 
then $\Phi(1,\gamma^-(b))>0$ and, by \eqref{conto}, $(\mathcal L-\rho)f_{b^*}(x)>0$ for each $x>x^*_{b^*}$.
\end{proof}

\section*{Proof of Proposition \ref{propInternalMin}}
\begin{proof}
From \eqref{polProp}, by some computations, the derivative $(\gamma^-)'$ of $\gamma^-$ is
\begin{eqnarray}\label{gammaPrimeProp}
(\gamma^-)'(b)=-\gamma^-(b)\frac{\phi(b)}{D(b)},
\end{eqnarray}
where $\phi$ is given by \eqref{phi} and 
\begin{eqnarray}\label{D}
D(b) =\sigma^2 b^2\gamma^-(b) + \mu(\theta b-(\theta-\eta)).
\end{eqnarray} 
The symmetric axis of the parabola $\Phi(b,\gamma)=0$ (see \eqref{polProp}) 
is $\displaystyle{\widehat{\gamma}(b)=-\frac{\mu(\theta b-(\theta-\eta))}{\sigma^2 b^2}}$; as a consequence, 
since $\gamma^-(b)$ is the negative solution to the equation $\Phi(b,\gamma)=0$, it follows $D(b)<0$ for each $b \in [0,1]$.
Since $\frac{\gamma^-(b)}{D(b)}>0$, $\phi(0)=\mu \theta>0$ and since, by some computations, 
\begin{eqnarray}\label{phiPrime}
\phi'(b) = -\frac{\sigma^2\mu(\theta-\eta)\gamma^-(b)}{\sigma^2 b^2\gamma^-(b) + \mu(\theta b-(\theta-\eta))}<0,
\end{eqnarray}
using $\eqref{gammaPrimeProp}$ we conclude that $\gamma^-$ admits in $[0,1]$ a unique minimizer $b^*>0$,
hence item (i) follows.

As for the second, notice that 
\begin{eqnarray}\label{IFF}
b^*<1 \quad \iff \quad (\gamma^-)'(1)>0.
\end{eqnarray}
Taking into account \eqref{gammaPrimeProp} and noting that  
(cf. \eqref{phi} and \eqref{D})
\begin{eqnarray*}
\phi(1)=\sigma^2\gamma^-(1)+\mu \theta
\end{eqnarray*}
and 
\begin{eqnarray*}
D(1)=\sigma^2 \gamma^-(1) + \mu\eta<0,
\end{eqnarray*}
we see that 
\begin{eqnarray}\label{expr2}
(\gamma^-)'(1)>0 \quad \iff\quad
\phi(1)=\sigma^2\gamma^-(1)+\mu \theta  =\mu\left(\theta - \eta -\sqrt{\eta^2+\frac{2\rho\sigma^2}{\lambda\mu^2}}\right) <0.
\end{eqnarray}
Hence, combining \eqref{IFF} and \eqref{expr2}, we get \eqref{cond:proportional}.
Finally, using again $\eqref{gammaPrimeProp}$, since $\phi(0)>0$, $\phi(1)<0$ and $\phi'(b)<0$ for each $b \in [0,1]$,
we conclude that $b^*$ is the unique solution of \eqref{equationPhi}.
\end{proof}

\section*{Proof of Proposition \ref{propEOL}}
\begin{proof}
From \eqref{pol}, by some computations, the derivative $(\gamma^-)'$ of $\gamma^-$ is
\begin{eqnarray}\label{gammaPrimeEOL}
(\gamma^-)'(b)=-\gamma^-(b)\frac{\phi(b)}{D(b)},
\end{eqnarray}
where 
\begin{eqnarray}
\phi(b) &:=& \frac{1}{2}\gamma^-(b)M_2'(b)+\theta M_1'(b), \label{gammaPrime:num}\\
D(b)&:=&M_2(b)\gamma^-(b) + \theta M_1(b) -(\theta-\eta)\mu. \label{gammaPrime:den}
\end{eqnarray} 
The symmetric axis of the parabola $\Phi(b,\gamma)=0$ (see \eqref{pol}) 
is $\displaystyle{\widehat{\gamma}(b)=-\frac{\theta M_1(b)-(\theta-\eta)\mu}{M_2(b)}}$; as a consequence, 
since $\gamma^-(b)$ is the negative solution to the equation $\Phi(b,\gamma)=0$, it follows $D(b)<0$ for each $b \in [0,1]$.
We compute
\begin{eqnarray*}
M_1'(b)&=&\frac{1}{(1-b)^2}\int_{\frac{b}{1-b}}^{+\infty} p(z)dz,\\
M_2'(b) &=& \frac{2b}{(1-b)^3}\int_{\frac{b}{1-b}}^{+\infty} p(z)dz,
\end{eqnarray*}
so that, by \eqref{gammaPrime:num},
\begin{eqnarray}\label{eqPhi}
\phi(b)=\psi(b)\chi(b),
\end{eqnarray}
where $\psi$ is given in \eqref{psi} 
and 
\begin{eqnarray*}
\chi(b):= \frac{1}{(1-b)^2}\int_{\frac{b}{1-b}}^{+\infty} p(z)dz, \quad b\in[0,1].
\end{eqnarray*}
Since $\chi$ is strictly positive on $[0,1)$ and $\lim_{b \to 1^-} \psi(b)=-\infty$, then by
\eqref{eqPhi} we obtain that there exists $\delta >0$ such that $\phi(b)<0$ for each $b \in (1-\delta,1)$. 
Hence, by \eqref{gammaPrimeEOL} and recalling that $\gamma^-(b)$ and $D(b)$ are strictly negative, 
we have $(\gamma^-)'(b)>0$ for each $b \in (1-\delta,1)$, which implies $1 \notin \mathcal B^*$.

Since $\psi(0)=\theta>0$ and recalling that $\lim_{b \to 1^-} \psi(b)=-\infty$, then the equation $\psi(b)=0$ has at least one solution in $(0,1)$, 
and $\gamma^-$ is therefore minimized by some $b^* \in (0,1)$ solution to equation \eqref{equationPsi}.
\end{proof}

\medskip

\indent \textbf{Acknowledgments.} 
The authors are grateful to Tingjin Yan for valuable comments on the first draft of the paper.
The work of Giorgio Ferrari has been funded by the Deutsche Forschungsgemeinschaft (DFG, German Research Foundation) – Project-
ID 317210226 – SFB 1283. Part of this work was done while Salvatore Federico and Maria Laura Torrente were visiting the Center for Mathematical Economics (IMW) at Bielefeld University. Both authors thank IMW for the hospitality



\begin{thebibliography}{99}






\bibitem{AsmussenSteffensen} \textsc{Asmussen, S., Steffensen, M.}\ (2020). 
Risk and Insurance. Springer.

\bibitem{BrachettaCeci2021} \textsc{Brachetta, M., Ceci, C.}\ (2021).
Optimal Reinsurance Problem under Fixed Cost and Exponential Preferences. Mathematics 9(4), 295.

\bibitem{BrockThompson} \textsc{Brock, W.A., Thompson, R.G.}\ (1966). 
Convex Solutions of Implicit Relations. Mathematics Magazine 39(4), pp.\ 208--211.

\bibitem{CaiDeAngelis} \textsc{Cai, C., De Angelis, T.}\ (2023).
A Change of Variable Formula with Applications to Multi-dimensional Optimal Stopping Problems. Stochastic Processes and their Applications 164, pp.\ 33--61.

\bibitem{Changetal} \textsc{Chang, C.W., Chang J.S.K., Yu, M.-T.}\ (1996).
Pricing Catastrophe Insurance Futures Call Spreads: A Randomized Operational Time Approach. The Journal of Risk and Insurance 63(4), Symposium on
Catastrophic Risk (Dec.\ 1996), pp.\ 599--617.

\bibitem{CumminsGeman} \textsc{Cummins, J.D., Geman, H.}\ (1995).
Pricing Catastrophe Insurance Futures and Call Spreads: An Arbitrage Approach. The Journal of Fixed Income 4(4), pp.\ 46--57.

\bibitem{Decamps} \textsc{D\'ecamps, J.P., Mariotti, T., Rochet, J.-C., Villeneuve, S.}\ (2011).
Free Cash Flow, Issuance Costs, and Stock Prices. The Journal of Finance, 66(5), pp.~1501--1544.

\bibitem{DiksonWaters} \textsc{Dickson, D.C.M., Waters, H.R.}\ (2004). 
Some Optimal Dividend Problems. ASTIN Bulletin 34, pp.\ 49--74.

\bibitem{EgamiYoung} \textsc{Egami, M., Young, V.R.}\ (2009). 
Optimal Reinsurance Strategy under Fixed Cost and Delay. Stochastic Processes and their Applications 119(3), pp.\ 1015--1034.

\bibitem{EisenbergFabrykowskiSchmeck} \textsc{Eisenberg, J., Fabrykowski, L., Schmeck, M.D.}\ (2021).
Optimal Surplus-Dependent Reinsurance under Regime-Switching in a Brownian Risk Model. Risks 9(4), 73.

\bibitem{EisenbergSchmidli2009} \textsc{Eisenberg, J., Schmidli, H.}\ (2009). 
Optimal Control of Capital Injections by Reinsurance in a Diffusion Approximation. Bl\"atter DGVFM 30, pp.\ 1--13.

\bibitem{EisenbergSchmidli2011} \textsc{Eisenberg, J., Schmidli, H.}\ (2011).  
Optimal Control of Capital Injections by Reinsurance with a Constant Rate of Interest. Journal of Applied Probability 48(3), pp.\ 733--748.

\bibitem{EisenbergSchmidli2011b} \textsc{Eisenberg, J., Schmidli, H.}\ (2011).  
Minimising Expected Discounted Capital Injections by Reinsurance in a Classical Risk Model. Scandinavian Actuarial Journal 3, pp.\ 155--176.

\bibitem{Ferrari17} \textsc{Ferrari, G.}\ (2019). 
On a Class of Singular Stochastic Control Problems for Reflected Diffusions. Journal of Mathematical Analysis and Applications 473(2), pp.\ 952--979.

\bibitem{FerrariSchuhmann} \textsc{Ferrari, G., Schuhmann, P.}\ (2019). 
An Optimal Dividend Problem with Capital Injections over a Finite Horizon. SIAM Journal on Control and Optimization 57(4), pp.\ 2686--2719. 

\bibitem{GerberShiu} \textsc{Gerber, H.U., Shiu, E.S.W.}\ (1998). 
On the Time Value of Ruin. North American Actuarial Journal 2(1), pp.\ 48--72.

\bibitem{HeKawaiShimizuYamazaki2023} \textsc{He, Y., Kawai, R., Shimizu, Y., Yamazaki, K.}\ (2023). 
The Gerber-Shiu Discounted Penalty Function: A Review from Practical Perspectives. Insurance: Mathematics and Economics 109, pp.\ 1--28.

\bibitem{Lietal} \textsc{Li, P., Zhou, M., Yin, C.}\ (2015).  
Optimal Reinsurance with both Proportional and Fixed Costs. Statistics and Probability Letters 106, pp.\ 134--141.

\bibitem{LiZhouYao} \textsc{Li, P., Zhou, M., Yao, D.}\ (2022).  
Optimal Time for the Excess-of-Loss Reinsurance with Fixed Costs. International Review of Economics \& Finance 79, pp.\ 466--475.

\bibitem{LokkaZervos} \textsc{Lokka, A., Zervos, M.}~(2008). 
Optimal Dividend and Issuance of Equity Policies in the Presence of Proportional Costs. Insurance: Mathematics and Economics 42, pp.~954--961.

\bibitem{ScheerSchmidli2011} \textsc{Scheer, N., Schmidli, H.}~(2011). 
Optimal Dividend Strategies in a Cram\'er-Lundberg Model with Capital Injections and Administration Costs. European Actuarial Journal 1, pp.~57--92.


\bibitem{Schmidli2008} \textsc{Schmidli, H.}\ (2008). 
Stochastic Control in Insurance. Springer-Verlag, London.

\bibitem{Schmidli2016} \textsc{Schmidli, H.}~(2016). 
On Capital Injections and Dividends with Tax in a Diffusion Approximation. Scandinavian Actuarial Journal 2017(9), pp.\ 751--760.

\bibitem{Schmidli2017} \textsc{Schmidli, H.}\ (2017). Risk Theory. London: Springer.

\bibitem{Shreve} \textsc{Shreve S.E., Lehoczky J.P., Gaver D.P.}\ (1984). 
Optimal Consumption for General Diffusions with Absorbing and Reflecting Barriers. SIAM Journal on Control and Optimization 22, pp.\ 55--75.

\bibitem{SilvestrovMartinLoef} \textsc{Silvestrov, D., Martin-L\"of, A.\ (Eds.)}\ (2014). 
Modern Problems in Insurance Mathematics. Springer.

\bibitem{Yanetal} \textsc{Yan, T., Park, K., Wong H.Y.}\ (2022). 
Irreversible Reinsurance: A Singular Control Approach. Insurance: Mathematics and Economics 107, pp.~326--348.

\bibitem{Zhou} \textsc{Zhou, M., Yuen, K. C.}\ (2015). 
Portfolio Selection by Minimizing the Present Value of Capital Injection Costs. Astin Bulletin, 45 (1), pp.~207-238.

\bibitem{Yang} \textsc{Zhu, J., Yang, H.}\ (2016). 
Optimal Capital Injection and Distribution for Growth Restricted Diffusion Models with Bankruptcy. Insurance: Mathematics and Economics 70, pp.~259--271.



\end{thebibliography}
\end{document}